\newtheorem{teo}{Theorem}[section]
\newtheorem{lem}[teo]{Lemma}
\newtheorem{prop}[teo]{Proposition}
\theoremstyle{definition}
\newtheorem{dfn}[teo]{Definition}
\newtheorem{ex}[teo]{Example}
\def\<{\langle}
\def\>{\rangle}
\def\ss{\subset}
\def\sse{\subseteq}
\def\a{\alpha}
\def\g{\gamma}
\def\e{\varepsilon}
\def\t{\tau}
\def\f{{\varphi}}
\def\F{{\Phi}}
\def\C{{\mathbb C}}
\def\bK{{\mathbf{K}}}
\def\bL{{\mathbf{L}}}
\def\B{{\mathcal B}}
\def\A{{\mathcal A}}
\def\M{{\mathcal M}}
\def\cN{{\mathcal N}}
\def\1{\mathbf 1}
\begin{document}
\title[Compact operators on  Hilbert $C^*$-modules]
{Geometric essence of ``compact'' operators
on Hilbert $C^*$-modules}
\author{Evgenij Troitsky}
\thanks{This work is
supported by the Russian Foundation for Basic Research
under grant 16-01-00357.}
\address{Dept. of Mech. and Math., Moscow State University,
119991 GSP-1  Moscow, Russia}
\email{troitsky@mech.math.msu.su}
\urladdr{
http://mech.math.msu.su/\~{}troitsky}

\keywords{Hilbert $C^*$-module, uniform structure,
totally bounded set, compact operator, $\A$-compact operator}
\subjclass[2010]{46L08; 47B10; 47L80; 54E15}

\begin{abstract}
We introduce a uniform structure on any Hilbert $C^*$-module
$\cN$ and prove the following theorem: suppose,
$F:\M\to\cN$ is a bounded adjointable morphism of
Hilbert $C^*$-modules over $\A$ and $\cN$ is countably generated.
Then $F$ belongs to the Banach space generated by
operators $\theta_{x,y}$, $\theta_{x,y}(z):=x\<y,z\>$,
$x\in \cN$, $y,z\in\M$ (i.e. $F$ is $\A$-compact, or
``compact'')
if and only if $F$ maps the unit ball of $\M$
to a totally bounded set with respect to this uniform
structure (i.e. $F$ is a compact operator).
\end{abstract}

\maketitle

\section*{Introduction}
The equivalence of two definitions of compactness of
operators on a Hilbert space (to be approximated by finite-dimensional operators and to map bounded sets to totally bounded
sets) is an extremely useful source for the study of
compact operators.

That is why it is interesting to obtain some similar
equivalence in the case of Hilbert $C^*$-modules. 
For a long time this problem was considered as
not having a reasonable solution, because, roughly speaking,
$\A$-compact operators are far from $\C$-finite-dimensional
ones. 

Up to our knowledge,
the only attempt to obtain some results in this direction
was made very recently by D.~Ke\v{c}ki\'c and Z.~Lazovi\'c in
\cite{KeckicLazovic2018}. Namely, they have introduced
some system of pseudo-metrics 
(related to known topologies \cite{Pas1,Frank1990}, 
see also \cite{Magajna})
on the standard Hilbert $C^*$-module 
$\ell_2(\A)$ and the corresponding notion
of a totally bounded set, where $\A$ is a von Neumann algebra.
In other words, they have suggested to consider
total boundedness with respect to a uniform structure,
which is not induced by the norm of the Hilbert $C^*$-module
under consideration.
Unfortunately, their approach does not give a solution
of the above problem, because it works only for $W^*$-algebras
and for $\cN=\ell_2(\A)$ and they prove the equivalence
of $\A$-compactness (they name it ``compactness'') 
of an adjointable operator $F:\ell_2(\A)\to\ell_2(\A)$ 
and total boundedness
of $F(B)$, where $B$ is the unit ball of $\ell_2(\A)$, 
for $\A=\B(H)$,
the algebra of all operators on a Hilbert space $H$. Unfortunately, in general, even for commutative algebras, they are not equivalent,
but ``compactness'' implies compactness. 

Quite recently, Z.~Lazovi\'c
\cite{lazovic2018} has involved unital $C^*$-algebras
in the context, but the other above listed unsatisfactory moments remain.

Our approach (namely, a choice of some other system of 
pseudo-metrics to define a uniform structure) 
seems to be giving a solution to the problem
overcoming these difficulties, in particular, the
corresponding notion of total boundedness is defined
and has good properties: 
 for any $C^*$-algebra  (not only unital), 
 and for any Hilbert $C^*$-module over $\A$ (not only
 the standard one) and it gives
the equivalence of $\A$-compactness  
of an adjointable operator $F$ and total boundedness
of $F(B)$ for all $\M$ and $\cN$ (with the only restriction:
$\cN$ is supposed to be countably generated; this is a very
natural restriction by Lemma \ref{lem:obraz_a_comp}).
This equivalence is our main result 
(Theorem \ref{teo:mainteorem}).

\medskip
The paper is arranged in the following way. In Section \ref{sec:prelim}
we recall some facts and definitions from the theory
of $C^*$-algebras and Hilbert $C^*$-modules. Also
we give a technical definition of relatively $\A$-compact
operators (Definition \ref{dfn:AcompSubm}) and prove a
couple of properties of $\A$-compact operators to be used
later.

In Section \ref{sec:def_unif} we define our uniform
structure, prove some properties of the related total
boundedness, and formulate the main result.

In Section \ref{sec:case_N=A} we prove an important
particular case (more precisely, a variant of the
main result in a particular case) to be used in the   
proof of the general case.

In Section \ref{sec:gencasereduc} we prove the main
result using a circle of implications. Some necessary
facts are proved as separate lemmas.

\medskip
\textsc{Acknowledgment:} The author is indebted to
V.~M.~Manuilov for helpful discussions.

This work is
supported by the Russian Foundation for Basic Research
under grant 16-01-00357.

\section{Preliminaries}\label{sec:prelim}
We start with a couple of statements about states. The
first one is very well known \cite[Theorem 3.3.2]{Murphy}.
\begin{lem}\label{lem:comparkvadr}
For any state $\f$ on $\A$ and any $a\in\A$ one has
$|\f(a)|^2\le \f(a^*a)$.
\end{lem}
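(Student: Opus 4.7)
The plan is to reduce the claim to the standard Cauchy--Schwarz inequality for a positive semi-definite sesquilinear form, which is a purely algebraic consequence of the positivity of the form $(u,v)\mapsto \f(v^*u)$ on $\A$.

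Concretely, I would first verify that the map $B(u,v):=\f(v^*u)$ is a positive semi-definite sesquilinear form on $\A$: linearity in $u$ is clear from linearity of $\f$; conjugate linearity in $v$ uses the identity $\f(v^*u)=\overline{\f(u^*v)}$, which follows because $\f$ is a \emph{positive} functional and therefore hermitian; positivity $B(u,u)=\f(u^*u)\ge 0$ is exactly the defining property of a positive functional, since $u^*u\ge 0$ in $\A$. Once $B$ is established as positive semi-definite, the usual scalar Cauchy--Schwarz argument (expanding $B(u-\l v,u-\l v)\ge 0$ for an optimal choice of $\l\in\C$) yields
\[
|\f(v^*u)|^2 \le \f(u^*u)\,\f(v^*v)
\qquad \text{for all } u,v\in\A.
\]

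To obtain the stated inequality $|\f(a)|^2\le \f(a^*a)$, the natural move is to specialize $u=a$ and $v$ to a unit. If $\A$ is unital, take $v=1$; then $\f(v^*u)=\f(a)$ and $\f(v^*v)=\f(1)=1$ (states have norm one and attain it at the unit), and the conclusion is immediate. If $\A$ is non-unital, I would either pass to the unitization $\widetilde\A$ and extend $\f$ to a state there, or work with an approximate unit $(e_\l)$ and pass to the limit in $|\f(e_\l a)|^2\le \f(a^*a)\,\f(e_\l^2)$, using $\f(e_\l a)\to \f(a)$ and $\f(e_\l^2)\to \|\f\|=1$.

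The only mildly subtle point is the non-unital case, but it is entirely standard; the essential content is the positivity of the sesquilinear form $B$, after which the inequality is a formal consequence. Since the paper only cites Murphy's textbook here (Theorem~3.3.2), I expect the author's proof either to omit the argument entirely or to reproduce exactly this two-line sketch.
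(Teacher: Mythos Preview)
Your proposal is correct and is exactly the standard argument behind the cited reference; the paper itself gives no proof, merely citing \cite[Theorem~3.3.2]{Murphy}. Your last sentence anticipates this precisely.
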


The following ``inverse'' statement is also
known, but we have not found an appropriate reference.

\begin{lem}\label{lem:estimfornonpositive}
For any $a\in \A$ there is a state $\f$ such that
$\|a\|\le 2 |\f(a)|.$  
\end{lem}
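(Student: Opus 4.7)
The plan is to reduce to the self-adjoint case by the standard real/imaginary decomposition, then invoke the existence of a norm-attaining state on a self-adjoint element.

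First I would write $a = x + iy$ with $x := (a+a^*)/2$ and $y := (a-a^*)/(2i)$ both self-adjoint. Since $\|a\| \le \|x\| + \|y\|$, at least one of the two self-adjoint parts has norm $\ge \|a\|/2$; without loss of generality assume $\|x\| \ge \|a\|/2$.

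Next I would argue that for the self-adjoint element $x$ there is a state $\f$ on $\A$ with $|\f(x)| = \|x\|$. In the unital case this is immediate from the weak-$*$ compactness of the state space together with the classical identity $\|x\| = \sup\{|\f(x)| : \f \text{ state}\}$ for $x$ self-adjoint. In the non-unital case I would pass to the unitization $\wt\A$: pick a state $\psi$ on $\wt\A$ attaining $|\psi(x)| = \|x\|$, observe that $\psi|_\A \ne 0$ (otherwise $\psi(x)=0$), and note that $\r := \|\psi|_\A\| \le 1$; then $\f := \psi|_\A / \r$ is a state on $\A$ with $|\f(x)| = \|x\|/\r \ge \|x\|$, while $|\f(x)| \le \|x\|$ forces equality (and $\r=1$).

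Finally, because $\f$ is a state, $\f$ sends self-adjoint elements to $\R$. Therefore
\[
\f(a) = \f(x) + i\,\f(y) \qquad \text{with } \f(x),\f(y)\in\R,
\]
so $|\f(a)| \ge |\RE \f(a)| = |\f(x)| = \|x\| \ge \|a\|/2$, which gives the desired inequality $\|a\|\le 2|\f(a)|$.

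The only non-routine point is the existence, in the non-unital setting, of a state on $\A$ itself (not merely on $\wt\A$) that attains the norm on a given self-adjoint element; the rescaling argument above handles this and is the main thing to verify carefully. The constant $2$ is sharp, as one sees on $2\times 2$ nilpotent matrices, so the decomposition into real and imaginary parts cannot be improved.
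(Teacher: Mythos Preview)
Your proof is correct and rests on the same core idea as the paper: write $a=x+iy$ with $x,y$ self-adjoint and invoke the existence of a state attaining $\|b\|=|\f(b)|$ for a self-adjoint $b$ (the paper cites \cite[Theorem~3.3.6]{Murphy} for this). The paper applies this to \emph{both} parts, obtains $\|a\|\le 2\sup_{\f}|\f(a)|$, and then appeals to weak-$*$ compactness of the state space to realize the supremum by an actual state. You instead pick the larger of $\|x\|,\|y\|$, find a single state $\f$ attaining that norm, and use the elementary observation that $\f(a)=\f(x)+i\f(y)$ with $\f(x),\f(y)\in\R$, so $|\f(a)|\ge |\f(x)|$ (or $|\f(y)|$). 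This bypasses the final compactness step entirely and produces the desired state directly; it is a genuine simplification. Your careful handling of the non-unital case via the unitization and rescaling of $\psi|_\A$ is also more explicit than the paper's citation, and is fine (the case $x=0$ being trivial). The ``WLOG $\|x\|\ge\|a\|/2$'' is justified by the symmetric argument with $|\f(a)|\ge|\f(y)|$ when $\|y\|\ge\|a\|/2$, which you might spell out in one line.
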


\begin{proof}
Decompose: $a=\frac{1}{2}(a+a^*) + i\cdot \frac{1}{2i} (a-a^*)$.
Then, by \cite[Theorem 3.3.6]{Murphy}, for some states $\f_1$ and $\f_2$,
$$
\|a\|\le \frac{1}{2} \|a+a^*\| +\frac{1}{2}\|i(a-a^*)\|=
\frac{1}{2} (\f_1(a+a^*)+\f_2(i(a-a^*))\le
$$
$$
\le \frac{1}{2} (|\f_1(a)+\overline{\f_1(a)}| +
|\f_2(a)-\overline{\f_2(a)}|)
\le 2 \sup_{\f\mbox{ is a state}} 
|\f(a)|.
$$
Since the set of states is *-weakly compact (\cite[Theorem 5.1.8]{Murphy}), 
the continuous function $\f\mapsto|\f(a)|$
reaches its maximum.
\end{proof}  

Now we will give some basic facts about Hilbert
$C^*$-modules over $\A$ and $\A$-compact operators. 
Details and proofs
can be found in books \cite{Lance,MTBook} and 
survey paper \cite{ManuilovTroit2000JMS}. Some other
directions joining Hilbert $C^*$-modules and operator
theory can be found in
 \cite{FMT2010Studia,PavlovTro2011,BlanchGogi,TroManAlg}.

\begin{dfn}
A (right) pre-Hilbert $C^*$-module over a $C^*$-algebra $\A$
is an $\A$-module equipped with an $\A$-\emph{inner product}
$\<.,.\>:\M\times\M\to \A$ being a sesquilinear form on the
underlying linear space and restricted to satisfy:
\begin{enumerate}
\item $\<x,x\> \ge 0$ for any $x\in\M$;
\item $\<x,x\> = 0$ if and only if $x=0$;
\item $\<y,x\>=\<x,y\>^*$ for any $x,y\in\M$;
\item $\<x,y\cdot a\>=\<x,y\>a$ for any $x,y\in\M$, $a\in\A$.
\end{enumerate}

A pre-Hilbert $C^*$-module over $\A$ is a 
\emph{Hilbert $C^*$-module}
if it is complete w.r.t. its norm $\|x\|=\|\<x,x\>\|^{1/2}$.

A Hilbert $C^*$-module $\M$ is \emph{countably generated}
if there exists a countable set of its elements with dense
set of $\A$-linear combinations.

We will denote by $\oplus$ the Hilbert sum of Hilbert
$C^*$-modules in an evident sense.
\end{dfn}

We have the following Cauchy-Schwartz inequality \cite{Pas1}
(see also \cite[Proposition 1.2.4]{MTBook}) for any $x,y\in\M$
\begin{equation}\label{eq:cau_schw}
\<x,y\>\<y,x\>\le \|y\|^2 \<x,x\>.
\end{equation}

\begin{dfn}\label{dfn:standard_hm}
The \emph{standard} Hilbert $C^*$-module $\ell_2(\A)$
(also denoted by $H_\A$) is the set of all infinite sequences
$a=(a_1,a_2,\dots,\infty)$, $a_i\in\A$, such that
the series $\sum_i (a_i)^* a_i$ is norm-convergent in $\A$.
It is equipped with the inner product $\<a,b\>=
\sum_i (a_i)^*b_i$, where $b=(b_1,b_2,\dots)$.

If $\A$ is unital, then $\ell_2(\A)$ is countably
generated.
\end{dfn}

One of the most nice properties of countably generated
modules is the following theorem \cite{Kasp} (see
\cite[Theorem 1.4.2]{MTBook}). We mean that an isomorphism
preserves the $C^*$-Hilbert structure.

\begin{teo}[Kasparov stabilization theorem]\label{teo:Kasp_st}
For any countably generated Hilbert $C^*$-module $\M$
over $\A$,
there exists an isomorphism of Hilbert $C^*$-modules
$\M\oplus \ell_2(\A)\cong \ell_2(\A)$.  
\end{teo}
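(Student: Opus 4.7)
I would follow the approach of Kasparov (simplified by Mingo): construct an explicit adjointable operator $T \colon \ell_2(\A) \to \M \oplus \ell_2(\A)$ and show it gives rise, via a regularized polar decomposition, to the desired isomorphism. Using countable generation, first fix a sequence $\{x_n\}_{n \ge 1}$ in the closed unit ball of $\M$ whose $\A$-linear span is dense in $\M$, arranged so that every element of a chosen countable dense generating set occurs infinitely many times in the list. With $\{e_n\}$ the standard basis of $\ell_2(\A)$, set
\[
T(e_n) := (2^{-n} x_n, \, 2^{-n} e_n) \in \M \oplus \ell_2(\A).
\]
Bounded adjointability follows from $\|x_n\| \le 1$, the Cauchy--Schwarz inequality \eqref{eq:cau_schw}, and $\sum 4^{-n} < \infty$; a direct calculation yields $T^*(y, b) = \sum_n 2^{-n}(\<x_n, y\> + b_n)\, e_n$.

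The core of the argument is to establish two density statements: $\Im T$ is dense in $\M \oplus \ell_2(\A)$ and $\Im T^*$ is dense in $\ell_2(\A)$. For the first, I would approximate a target $(y,b)$ in two stages. Truncating $b$ to a finite-support approximation, one can match the $\ell_2(\A)$-coordinate exactly by setting $c_n := 2^n b_n$ on the support, at the price of producing some spurious element $z \in \M$ in the first coordinate. Next, exploiting the infinite repetition of generators, one averages $N$ copies of a single generator with rescaled coefficients to produce any prescribed element of $\M$ in the first coordinate while the $\ell_2(\A)$-coordinate shrinks in norm like $1/\sqrt{N}$. Iterating and combining with a density exhaustion of $\M$ by $\A$-linear combinations of the $x_n$ yields density of $\Im T$. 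A symmetric argument handles $\Im T^*$.

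Having established these two density properties, I would invoke a Mingo-type lemma on regularized polar decomposition: for an adjointable operator $T$ between Hilbert $C^*$-modules with both $\Im T$ and $\Im T^*$ dense, the regularizations $V_\varepsilon := T(|T|^2 + \varepsilon)^{-1/2}$ converge strongly as $\varepsilon \to 0^+$ to a unitary $V \colon \ell_2(\A) \to \M \oplus \ell_2(\A)$ satisfying $V|T| = T$. This unitary $V$ is the sought isomorphism.

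\textbf{Main obstacle.} The essential difficulty is the absence of a genuine polar decomposition in Hilbert $C^*$-modules when the range is not closed, as is the case for our $T$: the modulus $|T|$ has $0$ in its spectrum, and no algebraic partial isometry $V$ satisfying $V|T| = T$ exists. The unitary must instead be recovered as a strong limit, and the convergence of that limit is precisely what is bought by the density of both $\Im T$ and $\Im T^*$. Proving those density statements purely $\A$-linearly is itself subtle, since $\A$ need not be unital and orthogonal complements of submodules need not exist; the combinatorial device of infinite repetition in $\{x_n\}$ is what substitutes for the Hilbert space techniques of projection and orthogonal decomposition.
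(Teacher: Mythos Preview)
The paper does not prove Theorem~\ref{teo:Kasp_st} at all: it is stated as a classical result with references to Kasparov's original paper \cite{Kasp} and to \cite[Theorem 1.4.2]{MTBook}, and is used later as a black box in the proof of the Main Theorem. So there is no proof in the paper to compare against.

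Your outline is the standard Kasparov--Mingo--Wegge-Olsen argument and is correct in its broad strokes. Two small remarks. First, in most presentations the two weights are taken unequal (e.g.\ $2^{-n}x_n$ and $4^{-n}e_n$), which makes the density of $\Im T$ slightly cleaner to verify; with equal weights $2^{-n}$ the argument still goes through, but you should check the bookkeeping carefully. Second, the passage from ``$\Im T$ and $\Im T^*$ dense'' to ``$V$ is unitary'' is usually phrased not via strong convergence of $V_\e$ but by observing that density of $\Im T^*$ is equivalent to density of $\Im|T|$, so $|T|$ has a densely defined inverse; one then sets $V := T|T|^{-1}$ on $\Im|T|$, checks directly that it is inner-product preserving, extends by continuity, and uses density of $\Im T$ to conclude surjectivity. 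Your regularized-limit formulation is equivalent in spirit, but the strong convergence of $V_\e$ as $\e\to 0$ is itself the step that requires justification, and it is exactly the same work as the direct argument.
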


\begin{dfn}\label{dfn:operators}
An \emph{operator} is a bounded $\A$-homomorphism.
An operator having an adjoint (in an evident sense) is 
\emph{adjointable} (see \cite[Section 2.1]{MTBook}).
We will denote the Banach space of all operators
$F: \M\to \cN$ by  $\bL(\M,\cN)$
and the Banach space of adjointable operators
by $\bL^*(\M,\cN)$. The space $\bL(\M,\M)$
is a Banach algebra and $\bL^*(\M,\M)$ is a $C^*$-algebra.
\end{dfn}

\begin{dfn}\label{dfn:Acompact}
An \emph{elementary} $\A$-\emph{compact} operator
$\theta_{x,y}:\M\to\cN$, where $x\in\cN$ 
and $y\in\M$, is defined as $\theta_{x,y}(z):=x\<y,z\>$.
Then the Banach space $\bK(\M,\cN)$
of $\A$-\emph{compact} operators
is the closure of the subspace generated by all 
elementary $\A$-compact operators in $\bL(\M,\cN)$.

Since $(\theta_{x,y})^*=\theta_{y,x}$, $\A$-compact operators
are adjointable.
\end{dfn}

Since $T\theta_{x,y}=\theta_{Tx,y}$ if $T\in \bL(\cN,\cN')$,
and $\theta_{x,y}S=\theta_{x,S^*y}$ if $S\in \bL(\M',\M)$,
we have the following statement (see also \cite[Section 2.2]{MTBook}).

\begin{prop}\label{prop:propAcomp}
The set of $\A$-compact operators on a Hilbert $C^*$-module 
is a (closed two-sided self-adjoint) ideal in the $C^*$-algebra
of adjointable endomorohisms.

If $F$ is an $\A$-compact operator $F:\M\to\cN$ and 
$F_1:\M\to\M$ and $F_2:\cN\to\cN$ are adjointable, then
$F\circ F_1$ and $F_2\circ F$ are $A$-compact.

If $F$ is an adjointable operator $F:\M\to\cN$ and
$K_1:\M\to\M$ and $K_2:\cN\to\cN$ are $\A$-compact, then
$F\circ K_1$ and $K_2\circ F$ are $A$-compact.
\end{prop}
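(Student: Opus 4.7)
The plan is to reduce every assertion to the two pointwise identities recalled just before the proposition: for any bounded $\A$-homomorphism $T\in\bL(\cN,\cN')$ one has $T\theta_{x,y}=\theta_{Tx,y}$, and for any adjointable $S\in\bL^*(\M',\M)$ one has $\theta_{x,y}S=\theta_{x,S^*y}$. Both follow immediately from $\theta_{x,y}(z)=x\<y,z\>$, the $\A$-linearity of $T$, and the defining property $\<y,Sz\>=\<S^*y,z\>$ of the adjoint.

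Closedness of $\bK(\M,\M)$ is built into its definition. Self-adjointness follows from $(\theta_{x,y})^*=\theta_{y,x}$: the linear span of elementary $\A$-compact operators is preserved by the involution, and since $*$ is an isometry on $\bL^*(\M,\M)$, it also preserves the norm-closure.

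For the composition statements, which together yield the two-sided ideal property once one specializes to $\M=\cN$, I would first verify them on a finite linear combination $K=\sum_i \l_i\,\theta_{x_i,y_i}$. The two identities give
\[
T K \;=\; \sum_i \l_i\,\theta_{T x_i,\,y_i}, \qquad K S \;=\; \sum_i \l_i\,\theta_{x_i,\,S^* y_i},
\]
which are again finite linear combinations of elementary $\A$-compact operators. Since $K\mapsto TK$ and $K\mapsto KS$ are bounded linear maps on $\bL(\M,\cN)$ (of operator norm at most $\|T\|$ and $\|S\|$ respectively), they extend by continuity to the norm-closure $\bK(\M,\cN)$ and send it into itself. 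Applying this with the $\A$-compact operator in the role of $K$ yields both composition statements of the proposition in a uniform way.

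There is no real obstacle here; the only point worth underlining is that right composition genuinely needs adjointability of the non-compact factor, because the identity $\theta_{x,y}S=\theta_{x,S^*y}$ invokes $S^*$. Left composition, by contrast, works for any bounded $\A$-homomorphism, so the adjointability hypothesis on the left factor in the proposition is actually stronger than what the argument requires.
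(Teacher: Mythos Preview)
Your proposal is correct and follows exactly the route the paper indicates: the paper does not spell out a proof but simply records the identities $T\theta_{x,y}=\theta_{Tx,y}$ and $\theta_{x,y}S=\theta_{x,S^*y}$ and refers to \cite[Section~2.2]{MTBook}, and you have supplied the routine details (pass to finite sums, then to the norm-closure by continuity). One tiny wording slip: when $T\in\bL(\cN,\cN')$ the map $K\mapsto TK$ lands in $\bK(\M,\cN')$ rather than in $\bK(\M,\cN)$ ``itself'', but this is harmless for the intended applications.
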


\begin{lem}\label{lem:Acompdirsum}
Let $F:\M\to \cN_1\oplus \cN_2$ be an adjointable  
operator. Then $F$ is $\A$-compact if and only if
$p_1 F$ and $p_2 F$ are $\A$-compact, where 
$p_1: \cN_1\oplus \cN_2 \to \cN_1$ and
$p_2: \cN_1\oplus \cN_2 \to \cN_2$ are 
orthogonal projections.
\end{lem}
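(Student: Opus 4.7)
The plan is to exploit that $\A$-compactness is preserved by composition with adjointable operators on either side, essentially the content of Proposition \ref{prop:propAcomp}. The identities $T\theta_{x,y} = \theta_{Tx,y}$ and $\theta_{x,y}S = \theta_{x, S^*y}$ recorded just before that proposition remain valid when $T$ and $S$ connect distinct Hilbert $C^*$-modules, so by taking closures the composition principle ``adjointable times $\A$-compact is $\A$-compact'' holds in this slightly more general setting as well.

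For the forward direction, the orthogonal projections $p_1$ and $p_2$ are adjointable (their adjoints being the canonical isometric inclusions $i_1:\cN_1 \to \cN_1\oplus\cN_2$ and $i_2:\cN_2 \to \cN_1\oplus\cN_2$). Hence whenever $F$ is $\A$-compact, the compositions $p_1 F$ and $p_2 F$ are $\A$-compact by the generalized composition principle above.

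For the converse, I would decompose
$$
F \;=\; i_1 p_1 F + i_2 p_2 F,
$$
which holds pointwise on $\M$ since $F(m)=(p_1F(m),p_2F(m))$ in $\cN_1\oplus\cN_2$. Since $i_1$ and $i_2$ are adjointable and $p_1 F$, $p_2 F$ are $\A$-compact by hypothesis, each summand $i_j p_j F$ is $\A$-compact. Because $\bK(\M,\cN_1\oplus\cN_2)$ is a closed linear subspace of $\bL(\M,\cN_1\oplus\cN_2)$, the sum $F$ itself is $\A$-compact.

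No real obstacle is expected here; the only minor point requiring a word of justification is the extension of the composition principle in Proposition \ref{prop:propAcomp} to operators acting between different modules, but this is immediate from the explicit formulas on elementary $\A$-compact operators and passing to closures.
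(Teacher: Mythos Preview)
Your proof is correct and essentially identical to the paper's: the paper also deduces the ``only if'' direction from Proposition~\ref{prop:propAcomp}, and for the ``if'' direction writes $F = p_1^* p_1 F + p_2^* p_2 F$ (your $i_j$ being exactly $p_j^*$) and approximates each summand by combinations of $\theta_{p_j^*(x_j),y_j}$. The only difference is cosmetic---the paper phrases the converse in terms of approximating sums of elementary operators rather than invoking the composition principle abstractly---and your remark about extending Proposition~\ref{prop:propAcomp} to maps between different modules is apt and handled just as you describe.
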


\begin{proof}
`Only if' follows immediately from Proposition \ref{prop:propAcomp}.

If $p_1 F$ is approximated by a combination of $\theta_{x_1,y_1}$,
$x_1\in\cN_1$, $y_1\in \M$, and 
$p_2 F$ is approximated by a combination of $\theta_{x_2,y_2}$,
$x_2\in\cN_1$, $y_2\in \M$, then $F=p_1^* p_1 F + p_2^* p_2 F$
is approximated by the sum of appropriate combinations of
$\theta_{p_1^*(x_1),y_1}$ and $\theta_{p_2^*(x_2),y_2}$.
\end{proof}

\begin{lem}\label{lem:obraz_a_comp}
Let $F:\M\to \cN$ be an $\A$-compact operator.
Then its image $F(\M)$
is contained in a countably generated module.
\end{lem}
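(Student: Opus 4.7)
The plan is to unwind the definition of $\A$-compactness and to observe that each elementary $\A$-compact operator $\theta_{x,y}:\M\to\cN$ has its image in the (singly generated, hence countably generated) closed submodule $\overline{x\cdot\A}\subseteq\cN$, because $\theta_{x,y}(z)=x\<y,z\>\in x\A$ for every $z\in\M$.

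From Definition \ref{dfn:Acompact}, since $F\in\bK(\M,\cN)$, there exists a sequence $F_n\to F$ in operator norm, where each $F_n$ is a finite $\C$-linear combination
$$F_n=\sum_{i=1}^{k_n} \lambda_i^{(n)} \theta_{x_i^{(n)},\, y_i^{(n)}},\qquad x_i^{(n)}\in\cN,\ y_i^{(n)}\in\M.$$
Collect the countable set $S:=\{x_i^{(n)}:n\in\N,\ 1\le i\le k_n\}\subset\cN$, and let $\cN_0\subseteq\cN$ be the norm-closure of the set of $\A$-linear combinations of elements of $S$. Then $\cN_0$ is a countably generated Hilbert $C^*$-submodule of $\cN$ by construction.

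By the observation at the start, $F_n(\M)\subseteq\cN_0$ for each $n$. For an arbitrary $z\in\M$, the sequence $F_n(z)$ lies in $\cN_0$ and converges in norm to $F(z)$; since $\cN_0$ is closed, $F(z)\in\cN_0$. Therefore $F(\M)\subseteq\cN_0$, which is the claim.

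There is no genuine obstacle here: the only point requiring any care is making sure that the constructed $\cN_0$ is simultaneously a closed submodule (so that it qualifies as a Hilbert $C^*$-module and absorbs norm limits) and countably generated, both of which are immediate from taking the closed $\A$-linear span of a countable family.
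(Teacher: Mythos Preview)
Your proof is correct and follows essentially the same approach as the paper: approximate $F$ in norm by finite combinations of elementary operators, collect the countably many first arguments $x_i^{(n)}$, and observe that the closed submodule they generate contains each $F_n(z)$ and hence each $F(z)$. The paper's version is phrased slightly differently (choosing $\e_n\to 0$ and showing each $F(x)$ is approximated within $\e$ by an element of the submodule), but the content is identical.
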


\begin{proof}
Indeed, suppose $\e_n\longrightarrow 0$ as $n\to\infty$.
Then for each $n$ there exists a
combination of $\theta_{y,z}$, which is $\e_n$-close to $F$:
$$
\|\theta_{y(n,1),z(n,1)}(x)+\cdots+ \theta_{y(n,k(n)),z(n,k(n))}(x)-F(x)\|<\e_n\|x\|.
$$
The Hilbert $C^*$-module generated by all $y(i,j)$ contains $F(\M)$,
since, for any $x\in\M$ and any $\e>0$, taking $\e_n<\e/\|x\|$
we have
$$
\|\theta_{y(n,1),z(n,1)}(x)+\cdots+ \theta_{y(n,k(n)),z(n,k(n))}(x)-F(x)\|<\e_n\|x\|<\e.
$$
\end{proof}

For technical reasons (not for the formulation of the results)
we will need a modification of the above definition.

\begin{dfn}\label{dfn:AcompSubm}
We say that an operator $F:\M\to\cN$ is $\A$\emph{-compact
relatively a submodule} $\cN^0\ss\cN$ if $F(\M)\subset \cN^0$
and $F$ is $\A$-compact as an operator from $\M$ to $\cN^0$.
Roughly speaking this means that $y$ in $\theta_{y,z}$ can
be taken from $\cN^0$. Denote the set of these operators
by $\bK(\M,\cN;\cN^0)$.
\end{dfn}

Proposition \ref{prop:propAcomp} implies the following
lemma.

\begin{lem}\label{lem:rel_comp_compos}
Suppose $F\in \bK(\M,\cN;\cN^0)$, $G\in \bL(\cN,\cN_1)$
and $G(\cN^0)\subset \cN^0_1$. Then $GF\in \bK(\M,\cN_1;\cN^0_1)$.

Suppose $F\in \bK(\M,\cN;\cN^0)$ and $G\in \bL^*(\M_1,\M)$.
Then $FG\in \bK(\M_1,\cN;\cN^0)$.
\end{lem}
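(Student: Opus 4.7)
The plan is to use the two algebraic identities noted just before Proposition \ref{prop:propAcomp}, namely $T\theta_{x,y}=\theta_{Tx,y}$ (valid for any bounded $\A$-linear $T$, with no adjoint of $T$ involved) and $\theta_{x,y}S=\theta_{x,S^*y}$ (which requires $S$ to be adjointable), and to pass them through the norm closure defining relative $\A$-compactness in Definition \ref{dfn:AcompSubm}.

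For the first statement, I would write $F$, viewed as an element of $\bL(\M,\cN^0)$, as a norm-limit of finite combinations $F_n=\sum_i\theta_{y(n,i),z(n,i)}$ with $y(n,i)\in\cN^0$ and $z(n,i)\in\M$. Left composition with $G$ then yields $GF_n=\sum_i\theta_{Gy(n,i),z(n,i)}$, whose first entries $Gy(n,i)$ lie in $\cN^0_1$ by hypothesis. Combined with the estimate $\|GF_n-GF\|\le\|G\|\,\|F_n-F\|\to 0$ and the inclusion $GF(\M)\subset G(\cN^0)\subset\cN^0_1$, this shows that $GF$ satisfies the conditions of Definition \ref{dfn:AcompSubm}, so $GF\in\bK(\M,\cN_1;\cN^0_1)$.

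For the second statement, the adjointability hypothesis $G\in\bL^*(\M_1,\M)$ permits the identity $\theta_{y,z}G=\theta_{y,G^*z}$, and hence $F_nG=\sum_i\theta_{y(n,i),G^*z(n,i)}$; the first entries $y(n,i)$ still lie in $\cN^0$, the inclusion $FG(\M_1)\subset F(\M)\subset\cN^0$ is immediate, and $\|F_nG-FG\|\le\|G\|\,\|F_n-F\|\to 0$ gives $FG\in\bK(\M_1,\cN;\cN^0)$.

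There is no genuine obstacle here; the only subtle point worth flagging is the asymmetry between the two halves. Left composition can get away with $G$ merely bounded because $G\theta_{y,z}=\theta_{Gy,z}$ never refers to $G^*$, while right composition genuinely needs $G^*$ to rewrite $\theta_{y,z}G$ as an elementary $\A$-compact operator, which is exactly why the second half demands $G\in\bL^*(\M_1,\M)$ rather than just $\bL(\M_1,\M)$.
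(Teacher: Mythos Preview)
Your argument is correct and is exactly the intended one: the paper gives no explicit proof of this lemma, merely remarking that it follows from Proposition~\ref{prop:propAcomp} (whose proof in turn rests on the identities $T\theta_{x,y}=\theta_{Tx,y}$ and $\theta_{x,y}S=\theta_{x,S^*y}$ that you invoke). Your write-up simply spells out that deduction in full, including the passage to norm limits and the check that images land in the prescribed submodules.
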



\section{Definitions and formulation of the main theorem}
\label{sec:def_unif}


Now we pass to the definition of the desired uniform structure
and totally bounded sets. 

\begin{dfn}\label{dfn:unif_str}
Any \emph{uniform structure} on
a non-empty space $X$ can be
defined by a system of \emph{pseudo-metrics} 
(see \cite[p.~188]{Kelley}),
i.e. functions $d_\a:X\times X \to [0,+\infty)$
restricted to satisfy:
\begin{enumerate}
\item[1)]
 the symmetry property
$d_\a(x,y)=d_\a(y,x)$;
\item[2)] the triangle inequality
$d_\a(x,z)\le d_\a(x,y) + d_\a(y,z)$;
\item[3)] the separation property: if $x\ne y$, then
$d_\a (x,y)>0$ for some $\a$.
\end{enumerate}
\end{dfn}

We will define \emph{totally bounded} sets directly
in our case in Definition \ref{dfn:totbaundset} below.

\begin{dfn}\label{dfn:admissyst}
Let $\cN$ be a Hilbert $C^*$-module over $\A$. A countable
system $X=\{x_{i}\}$ of its elements 
is called \emph{admissible} for a submodule $\cN_0\subset \cN$
(or $\cN^0$-admissible) if
 for each $x\in\cN^0$
partial sums of the series $\sum_i \<x,x_i\>\<x_i,x\>$ 
are bounded by $\<x,x\>$ 
and the series is convergent. 
In particular, $\|x_i\|\le 1$ for any $i$.
\end{dfn}

\begin{ex}\label{ex:firstex}
For the standard module $\ell_2(\A)$ over a unital
algebra $\A$ one can take for $X$ the natural base $\{e_i\}$.
In the case of $\ell_2(\A)$ over a general algebra $\A$,
one can take $x_i$ having only the $i$-th component nontrivial
and of norm $\le 1$.
The other important example is $X$ with only finitely many non-zero
elements in any module and an appropriate normalization.
\end{ex}

Denote by $\F$ a countable collection $\{\f_1,\f_2,\dots\}$
of states on $\A$. For each pair $(X,\F)$ 
with an $\cN^0$-admissible $X$,
consider the following pseudo-metrics
\begin{equation}\label{eq:defnmetr}
d_{X,\F}(x,y)^2:=\sup_k 
\sum_{i=k}^\infty |\f_k\left(\<x-y,x_i\>\right)|^2,\quad x,y\in \cN^0. 
\end{equation}

First, remark that this is a finite non-negative number.
Indeed, by Lemma \ref{lem:comparkvadr}
\begin{eqnarray*}
\sum_{i=k}^s|\f_k\left(\<x-y,x_i\>\right)|^2 & = &
\sum_{i=k}^s |\f_k\left(\<x_i,x-y\>\right)|^2
\le \f_k\left(\sum_{i=k}^s \<x-y,x_i\>\<x_i,x-y\>\right)\\
&\le &\left\|\sum_{i=k}^s \<x-y,x_i\>\<x_i,x-y\>\right\|
\le  \|x-y\|^2. 
\end{eqnarray*}
Since in (\ref{eq:defnmetr}) we have a series of non-negative
numbers, this estimation implies its convergence and the estimation
\begin{equation}\label{eq:sravnsobych}
d_{X,\F}(x,y)\le  \|x-y\|.
\end{equation}

For $x\ne y$ there exists $(X,\F)$ such that 
$d_{X,\F}(x,y)> \frac{1}{2} \|x-y\|$. Indeed, take $X$ with $x_1=\frac{x-y}
{\|x-y\|}$
and other $x_i=0$, and $\f_1$ such that 
$\f_1\left(\<x-y,x-y\>\right)>\frac{1}{2} \|x-y\|^2$.
Then for any $z$, inequality (\ref{eq:cau_schw}) implies
$$
\sum_i \<z,x_i\>\<x_i,z\>=\<z,x_1\>\<x_1,z\> \le \|x_1\|^2 \<z,z\>
\le \<z,z\>
$$
and
$$
d_{X,\F}(x,y)\ge |\f_1\left(\<x-y,x_1\>\right)|=
\frac{|\f_1\left(\<x-y,x-y\>\right)|}{\|x-y\|}
> \frac{1}{2} \|x-y\|.
$$

Let us verify the triangle inequality:
\begin{equation}\label{eq:triangle_inequa}
d_{X,\F}(u,v)\le d_{X,\F}(u,w) + d_{X,\F}(w,v), 
\end{equation}
which can be rewritten as
$$
d_{X,\F}(u-v,0)\le d_{X,\F}(u-w,0) + d_{X,\F}(w-v,0),
$$
and thus it is sufficient to prove
\begin{equation}\label{eq:triangle_inequa_1}
d_{X,\F}(z+x,0)\le d_{X,\F}(z,0) + d_{X,\F}(x,0).
\end{equation}
Take an arbitrary $\e>0$ and choose $k$ and $m$ such that
\begin{equation}\label{eq:triangle_inequa_3}
d_{X,\F}(z+x,0)<\sqrt{\sum_{i=k}^m |\f_i (\<z+x,x_i\>)|^2}
+\e. 
\end{equation}
We have
\begin{equation}\label{eq:triangle_inequa_4}
\sqrt{\sum_{i=k}^m |\f_i (\<z+x,x_i\>)|^2}\le 
\sqrt{\sum_{i=k}^m (|\f_i (\<z,x_i\>)|+|\f_i (\<x,x_i\>|)^2}
\end{equation}
Consider in $\C^{m-k+1}$ with the standard inner product
$\<.,.\>_\C$ and the norm $\|.\|_\C$ the following vectors with 
non-negative coordinates:
$$
\vec{a}=(a_1,\dots):=(|\f_k (\<z,x_k\>)|,\dots,|\f_m (\<z,x_m\>)|),
$$
$$
\vec{b}=(b_1,\dots):=(|\f_k (\<x,x_k\>)|,\dots,|\f_m (\<x,x_m\>)|).
$$
Then by the triangle inequality for $\|.\|_\C$,
$$
\sqrt{\sum_{i=k}^m (|\f_i (\<z,x_i\>)|+|\f_i (\<x,x_i\>|)^2}=
\sqrt{\sum_j (a_j+b_j)^2}=\|\vec{a}+\vec{b}\|_\C\le
\|\vec{a}\|_\C+\|\vec{b}\|_\C
$$
$$
=\sqrt{\sum_{i=k}^m |\f_i (\<z,x_i\>)|^2}
+\sqrt{\sum_{i=k}^m |\f_i (\<x,x_i\>)|^2}\le d_{X,\F}(z,0) + d_{X,\F}(x,0).
$$
Since $\e$ in (\ref{eq:triangle_inequa_3})  is arbitrary, together with (\ref{eq:triangle_inequa_4}) 
the last estimation gives (\ref{eq:triangle_inequa_1})  and hence 
(\ref{eq:triangle_inequa}).

Also, we have the following version of the triangle inequality:
\begin{equation}\label{eq:triangle_inequa_2}
d_{X,\F}(x+y,u+v)\le d_{X,\F}(x,u) + d_{X,\F}(y,v),
\end{equation}
since by (\ref{eq:triangle_inequa_1}),
$$
d_{X,\F}((x-u)+(y-v),0)\le d_{X,\F}(x-u,0) + d_{X,\F}(y-v,0).
$$

So, we have verifed that $d_{X,\F}$ satisfy the
conditions of Definition \ref{dfn:unif_str} 
and thus define a uniform structure on the unit ball of
$\cN^0$. 

\begin{dfn}\label{dfn:totbaundset}
A set $Y\ss \cN^0\ss \cN$ is \emph{totally bounded}
with respect to this uniform structure, if for any $(X,\F)$, 
where $X\ss \cN$ is $\cN^0$-admissible,
and any 
$\e>0$ there exists a finite collection $y_1,\dots,y_n$
of elements of $Y$ such that the sets
$$
\left\{ y\in Y\,|\, d_{X,\F}(y_i,y)<\e\right\}
$$  
form a cover of $Y$. This finite collection is an 
$\e$\emph{-net in $Y$ for} $d_{X,\F}$.

If so, we will say briefly that $Y$ is $(\cN,\cN^0)$-\emph{totally bounded}.
\end{dfn}

Now we are able to formulate our main result.

\begin{teo}[Main Theorem]\label{teo:mainteorem}
Suppose, $F:\M\to\cN$ is an adjointable operator and
$\cN$ is countably generated. Then $F$ is $\A$-compact
if and only if $F(B)$ is $(\cN,\cN)$-totally bounded,
where $B$ is the unit ball of $\M$.
\end{teo}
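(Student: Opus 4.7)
\medskip

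\textbf{Proof outline.} The plan is to prove the two implications separately, with most of the work going into showing that $(\cN,\cN)$-total boundedness of $F(B)$ implies $\A$-compactness of $F$.

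For the easy direction ($\A$-compact $\Rightarrow$ $F(B)$ is $(\cN,\cN)$-totally bounded), I would first verify directly that for a single elementary operator $\theta_{x,y}$ the set $\theta_{x,y}(B)=\{x\<y,z\>:\|z\|\le 1\}$ is $(\cN,\cN)$-totally bounded, and then use linearity combined with the ``additive'' triangle inequality \eqref{eq:triangle_inequa_2} to extend this to finite linear combinations of elementary operators. Since these combinations are dense in $\bK(\M,\cN)$ and since \eqref{eq:sravnsobych} gives $d_{X,\F}\le \|\cdot\|$, a standard $\e/3$ argument upgrades norm convergence $F_n\to F$ to $(\cN,\cN)$-total boundedness of $F(B)$.

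For the hard direction I would use Kasparov's stabilization theorem (Theorem \ref{teo:Kasp_st}) to reduce to the case $\cN=\ell_2(\A)$: via $\cN\oplus\ell_2(\A)\cong\ell_2(\A)$ the operator $F$ is turned into one into $\ell_2(\A)$, and Lemma \ref{lem:Acompdirsum} together with Proposition \ref{prop:propAcomp} ensures that the $\A$-compactness conclusion transports back to $F$ itself. In $\ell_2(\A)$ one has the natural coordinate truncation projections $P_n$, which are $\A$-compact (at least after smoothing by an approximate unit in the non-unital case, cf.\ Example \ref{ex:firstex}), so the whole problem reduces to showing $\|F-P_n F\|\to 0$. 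For this I would take $X$ to be the admissible system of Example \ref{ex:firstex} and $\F$ a suitably chosen countable family of states; then $d_{X,\F}$ reads off, state-wise, the tails of the coordinates of $F(z)$. A finite $\e$-net $\{F(z_1),\dots,F(z_N)\}$ has uniformly small tails beyond some large $n$ (each $F(z_j)$ is a single element of $\ell_2(\A)$ and therefore has norm-convergent tails), and the triangle inequality then propagates state-wise smallness of the tails to every $z\in B$.

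The main obstacle is the last step: converting the weak, state-wise tail bound into a genuine operator norm estimate on $(I-P_n)F$. Here I expect Lemma \ref{lem:estimfornonpositive} to be crucial, since it recovers $\|a\|$ (up to a factor of $2$) from a single well-chosen state, and I expect the argument to go through the particular case treated in Section \ref{sec:case_N=A} (presumably $\cN=\A$, where the tail estimates reduce to state-theoretic computations inside $\A$ itself). A natural circle of implications is thus to first settle the particular case, then bootstrap to $\ell_2(\A)$ coordinate by coordinate using the admissibility estimate $\sum_i\<x,x_i\>\<x_i,x\>\le\<x,x\>$, and finally unwind the stabilization isomorphism to obtain the statement for arbitrary countably generated $\cN$.
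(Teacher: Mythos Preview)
Your overall architecture---Kasparov stabilization, reduction to the case $\cN\subset\A$ handled in Section~\ref{sec:case_N=A}, and a coordinate-by-coordinate bootstrap in $\ell_2(\A)$---is exactly the paper's strategy, and your final paragraph has the right circle of implications.

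There is, however, a genuine gap in the middle paragraph of the hard direction. You write that the truncation projections $P_n$ are $\A$-compact ``at least after smoothing by an approximate unit in the non-unital case'' and that therefore ``the whole problem reduces to showing $\|F-P_nF\|\to 0$''. This is false when $\A$ is non-unital: take $\A$ $\sigma$-unital but not unital and $F=\Id:\A\to\A\hookrightarrow\ell_2(\A)$; then $P_1F=F$, so $\|F-P_nF\|=0$ for all $n\ge 1$, yet $F$ is not $\A$-compact. The point is that no smoothing of $P_n$ that is chosen \emph{independently of $F$} will make $P_nF$ $\A$-compact; what one actually needs is that each coordinate $q_iF:\M\to\A$ is itself $\A$-compact, and that is precisely the content of item~2) of Theorem~\ref{teo:mainfor_N=A} (applied after Lemma~\ref{lem:directsum_totbu} passes total boundedness down to each coordinate). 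In other words, the $\cN=\A$ case is needed not for the tail estimate $\|F-P_nF\|\to 0$ (that is Lemma~\ref{lem:epsilonapprell2_totbun}, a direct argument with the admissible system of Example~\ref{ex:firstex} and Lemma~\ref{lem:estimfornonpositive}, essentially what you sketch) but for the $\A$-compactness of $P_nF$ itself. Your last paragraph implicitly fixes this (``bootstrap to $\ell_2(\A)$ coordinate by coordinate''), but the middle paragraph mislocates the role of Section~\ref{sec:case_N=A}.

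For the forward direction, your plan (verify for a single $\theta_{x,y}$, then finite sums via \eqref{eq:triangle_inequa_2}, then norm limits via \eqref{eq:sravnsobych}) is correct and is a genuine shortcut relative to the paper, which runs the full stabilization cycle in both directions. The paper's Theorem~\ref{teo:mainfor_N=A}, item~1), does essentially your computation but for a general relatively-$\A$-compact $G$ all at once, which forces the auxiliary element $b=a(\tau+a)^{-1}$; doing it first for a single $\theta_{x,y}$ (where $x$ itself plays the role of $b$) and then building up avoids that technicality.
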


We will complete this section with the following property.

\begin{lem}\label{lem:directsum_totbu}
Suppose, the set $Y\subset \cN=\cN_1\oplus \cN_2$ 
is $(\cN,\cN^0)$-totally bounded, 
where $\cN^0$ is a countably
generated submodule. Then
$p_1 Y$ and $p_2 Y$ are 
$(\cN_1,\cN^0_1)$- and $(\cN_2,\cN^0_2)$-totally bounded, 
respectively, where $\cN^0_1=p_1(\cN^0)$ and
$\cN^0_2=p_2(\cN^0)$ are countably generated submodules,
and 
$p_1: \cN_1\oplus \cN_2 \to \cN_1$,
$p_2: \cN_1\oplus \cN_2 \to \cN_2$ are the orthogonal projections.

Conversely, if $p_1 Y$ and $p_2 Y$ are 
$(\cN_1,\cN^0_1)$- and $(\cN_2,\cN^0_2)$-totally bounded
for some $\cN^0_1$ and $\cN^0_2$, 
respectively, then $Y$ is $(\cN,\cN^0_1\oplus\cN^0_2)$-totally
bounded. Evidently $\cN^0_1\oplus\cN^0_2$ is countably generated
if $\cN^0_1$ and $\cN^0_2$ are countably generated.
\end{lem}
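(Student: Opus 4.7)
My overall strategy is to transfer admissible systems between $\cN$ and its summands and to observe that the constituent pseudo-metrics reduce to one another cleanly. For the \emph{forward direction}, given an $\cN^0_1$-admissible system $X^{(1)}=\{x_i^{(1)}\}$ in $\cN_1$ and a countable family $\F$ of states on $\A$, I would promote $X^{(1)}$ to $\cN$ by setting $x_i:=(x_i^{(1)},0)$. For any $(u,v)\in\cN^0$ one has $\<(u,v),x_i\>=\<u,x_i^{(1)}\>$ and $u=p_1(u,v)\in\cN^0_1$, so
$$
\sum_i \<(u,v),x_i\>\<x_i,(u,v)\>=\sum_i \<u,x_i^{(1)}\>\<x_i^{(1)},u\>\le \<u,u\>\le \<(u,v),(u,v)\>,
$$
showing that $X:=\{x_i\}$ is $\cN^0$-admissible. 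The same identity yields $d_{X,\F}(y,y')=d_{X^{(1)},\F}(p_1 y,p_1 y')$ for all $y,y'\in\cN^0$, so any $\e$-net $y_1,\dots,y_n$ of $Y$ for $d_{X,\F}$ projects to an $\e$-net $p_1 y_1,\dots,p_1 y_n$ of $p_1 Y$ for $d_{X^{(1)},\F}$; the argument for $p_2$ is symmetric, and $p_j(\cN^0)$ inherits countable generation from $\cN^0$.

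For the \emph{converse direction}, let $X=\{x_i\}$ be $(\cN^0_1\oplus\cN^0_2)$-admissible in $\cN$ with decomposition $x_i=(x_i^{(1)},x_i^{(2)})$. Testing admissibility of $X$ at $(u,0)$ for $u\in\cN^0_1$ and at $(0,w)$ for $w\in\cN^0_2$, together with $\|x_i^{(j)}\|\le\|x_i\|\le 1$, shows that each $X^{(j)}:=\{x_i^{(j)}\}$ is $\cN^0_j$-admissible. For a given $\e>0$, the hypothesis supplies $(\e/4)$-nets $\{u_j\}_{j=1}^n\ss p_1 Y$ and $\{v_k\}_{k=1}^m\ss p_2 Y$ for $d_{X^{(1)},\F}$ and $d_{X^{(2)},\F}$ respectively.

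The main technical point is how to glue these two nets into an actual net on $Y$; my device is a choice of witnesses. For each pair $(j,k)$ for which the set
$$
S_{jk}:=\{z\in Y\mid d_{X^{(1)},\F}(p_1 z,u_j)<\e/4,\ d_{X^{(2)},\F}(p_2 z,v_k)<\e/4\}
$$
is non-empty, fix some $z_{jk}\in S_{jk}$. Given $y\in Y$, the net property selects indices $j,k$ with $d_{X^{(1)},\F}(p_1 y,u_j)<\e/4$ and $d_{X^{(2)},\F}(p_2 y,v_k)<\e/4$; then $y\in S_{jk}$, so $z_{jk}$ exists, and the triangle inequality for each coordinate pseudo-metric gives $d_{X^{(\ell)},\F}(p_\ell y,p_\ell z_{jk})<\e/2$ for $\ell=1,2$. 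Decomposing $y-z_{jk}=(p_1 y-p_1 z_{jk},0)+(0,p_2 y-p_2 z_{jk})$ and combining (\ref{eq:triangle_inequa_1}) with the identities $d_{X,\F}((w,0),0)=d_{X^{(1)},\F}(w,0)$ and $d_{X,\F}((0,w'),0)=d_{X^{(2)},\F}(w',0)$, I conclude
$$
d_{X,\F}(y,z_{jk})\le d_{X^{(1)},\F}(p_1 y,p_1 z_{jk})+d_{X^{(2)},\F}(p_2 y,p_2 z_{jk})<\e,
$$
so $\{z_{jk}\}$ is the required $\e$-net. Countable generation of $\cN^0_1\oplus\cN^0_2$ follows at once from that of the summands.
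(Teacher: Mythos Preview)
Your proof is correct and follows essentially the same approach as the paper: in both directions you transfer admissible systems between $\cN$ and its summands via the inclusions/projections, identify the corresponding pseudo-metrics, and in the converse you pass from separate $\e/4$-nets on $p_1Y$ and $p_2Y$ to an $\e$-net on $Y$ by a ``witness'' selection. The only cosmetic difference is that the paper first forms the grid $\{J_1 z_k+J_2 w_s\}$ as an intermediate $\e/2$-net in $p_1Y\oplus p_2Y$ and then picks witnesses in $Y$, whereas you select witnesses $z_{jk}\in Y$ directly from the nonempty sets $S_{jk}$; the triangle-inequality bookkeeping is the same.
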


\begin{proof}
If $p_j Y$ is contained in a countably generated
submodule $\cN^0_j\ss \cN_j$, $j=1,2$, then $Y$
is contained in the countably generated module 
$\cN^0_1\oplus \cN^0_2$. Conversely, if $Y$ is
contained in a countably generated submodule $\cN^0$,
then $p_j \cN^0$ is countably generated and 
$p_j Y \subset p_j \cN^0$, $j=1,2$.

Denote by $J_j=p_j^*$ the corresponding inclusions
$J_j:\cN_j\hookrightarrow \cN_1\oplus \cN_2$, $j=1,2$.

Suppose $Y$  is $(\cN,\cN^0)$-totally bounded and $X=\{x_i\}$ is 
an admissible system for a countably generated submodule
$\cN^0_1\ss \cN_1$. Then $J_1X=\{J_1(x_i)\}$ is admissible for $\cN^0$ because
$$
\<x,J_1(x_i)\>\<J_1(x_i),x\>=\<p_1 x,x_i\>\<x_i,p_1 x\>.
$$
Let $y_1,\dots,y_s$ be an $\e$-net in $Y$ for $d_{J_1X,\F}$.
Then $p_1 y_1,\dots,p_1 y_s$ is an $\e$-net in $p_1 Y$ for 
$d_{X,\F}$.
Indeed, consider an arbitrary $z\in p_1 Y$. Then $z=p_1 y$ for
some $y\in Y$. Find $y_k$ such that $d_{J_1X,\F}(y,y_k)<\e$.
Then
\begin{eqnarray*}
d_{X,\F}^2(z,p_1 y_k)&=&\sup_k 
\sum_{i=k}^\infty |\f_k\left(\<z-p_1 y_k,x_i\>\right)|^2=
\sum_{i=k}^\infty |\f_k\left(\<p_1(y- y_k), x_i\>\right)|^2\\
&=&
\sum_{i=k}^\infty |\f_k\left(\<y- y_k, J_1( x_i)\>\right)|^2=
d_{J_1 X,\F}^2(y,y_k)<\e^2.
\end{eqnarray*}
Similarly for $j=2$.

Conversely, suppose that $p_j Y$ are 
$(\cN_j,\cN_j^0)$-totally bounded, $j=1,2$.
Let $X=\{x_i\}$ be an admissible system in $\cN$ for 
$\cN^0_1\oplus \cN^0_2$ and $\e>0$ is arbitrary.
Then $X_j:=\{p_j(x_i)\}$ is an admissible system in $\cN_j$
for $\cN^0_j$
and, for $u, v\in p_j Y$,
\begin{equation}\label{eq:projprojproj}
d_{X,\F} (J_j  u, J_j v) = 
d_{X_j,\F} ( u, v).
\end{equation}
Indeed, we obtain the convergence and can estimate the sum
using (as above) the equality
$$
\sum_{i=1}^s \<u-v,p_j x_i\>\<p_j x_i,u-v\>
= \sum_{i=1}^s \<J_j(u-v),x_i\>\< x_i,J_j(u-v)\>
$$
and, quite similarly, (\ref{eq:projprojproj}) follows from the equality
$$
\<J_j p_j u- J_j v,x_i\>= \<p_ju- v, p_j x_i\>,\qquad j=1,2.
$$

Suppose, $z_1,\dots,z_m$ is an $\e/4$-net 
in $p_1 Y$ for $d_{X_1,\F}$ and
$w_1,\dots,w_r$ is an $\e/4$-net 
in $p_2 Y$ for $d_{X_2,\F}$. 
Consider $\{z_k+w_s\}$, $k=1,\dots,m$,
$s=1,\dots,r$. Then $\{J_1 z_k+ J_2 w_s\}$ is an
$\e/2$-net in $p_1 Y \oplus p_2 Y$
 for $d_{X,\F}$.
Indeed, for any $J_1 p_1 y_1 +J_2 p_2 y_2$, $y_1,y_2\in Y$,
one can find $z_k$ and $w_s$
such that
$$
d_{X_1,\F}(p_1 y_1,z_k)<\e/4,\qquad
d_{X_2,\F}(p_2 y_2,w_s)<\e/4.
$$
Then by (\ref{eq:triangle_inequa_2})
and (\ref{eq:projprojproj})
\begin{eqnarray*}
d_{X,\F}(J_1 p_1 y_1 +J_2 p_2 y_2, J_1 z_k+ J_2 w_s)&\le& d_{X,\F}(J_1 p_1 y_1,J_1 z_k)+
d_{X,\F}(J_2 p_2 y_2, J_2 w_s) \\
&=&d_{X_1,\F}(p_1 y_1, z_k)+
d_{X_2,\F}(p_2 y_2,  w_s)<\e/2.
\end{eqnarray*}
Now find a subset $\{u_l\}\subset \{z_k+w_s\}$
formed by all elements of $\{z_k+w_s\}$ such that there exists an
element $u^*\in Y\sse p_1 Y \oplus p_2 Y$ 
with $d_{X,\F}(u^*,z_k+w_s)<\e/2$.  
Denote these $u^*$ by $u^*_l$, $l=1,\dots,L$. So,
\begin{enumerate}[1)]
\item for any $y\in Y$, there exists $l\in 1,\dots, L$
such that $d_{X,\F}(y,u_l)<\e/2$;
\item for each $l\in 1,\dots, L$, we have $d_{X,\F}(u^*_l,u_l)<\e/2$.
\end{enumerate}
By the triangle inequality, $\{u^*_l\}$ is a finite $\e$-net
in $Y$ for $d_{X,\F}$ and we are done.
\end{proof}

\section{The case $\cN\subset\A$}\label{sec:case_N=A}

We will prove the following statement in this section.

\begin{teo}\label{teo:mainfor_N=A}
Suppose that $G:\M\to \A$ is an adjointable
operator such that $G(\M)$ is contained in a countably
generated submodule $\cN^0\subset\A$.
Then 
\begin{enumerate}[\rm 1)]
\item if $G$ is $\A$-compact relatively $\cN^0$,
i.e. $G\in \bK(\M,\A;\cN^0)$, then
$G(B)$ is $(\A,\cN^0)$-totally bounded;
\item if $G(B)$ is $(\A,\cN^0)$-totally bounded, then $G$ is $\A$-compact, i.e. $G\in \bK(\M,\A;\A)=\bK(\M,\A)$.
\end{enumerate}
Here $B$ is the unit ball of $\M$ as above.
\end{teo}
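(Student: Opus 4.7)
The plan is to prove the two implications separately: for Part~(1), reduce to elementary operators and use two approximation steps; for Part~(2), construct explicit $\A$-compact approximations of $G$ from an approximate unit and use the total-boundedness hypothesis to push pointwise convergence to uniform convergence.

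\textbf{Part (1).} Since $G\in\bK(\M,\A;\cN^0)$ is a norm limit of finite sums of elementary operators $\theta_{y_j,z_j}$ with $y_j\in\cN^0$, and since finite linear combinations preserve $(\A,\cN^0)$-total boundedness (by the triangle inequality (\ref{eq:triangle_inequa_2})) while norm limits also do (via $d_{X,\F}\le\|\cdot\|$, see (\ref{eq:sravnsobych})), it suffices to verify total boundedness for a single $\theta_{y,z}$ with $y\in\cN^0$. Fix $\cN^0$-admissible $X=\{x_i\}$ and states $\F=\{\f_k\}$; any point of $\theta_{y,z}(B)$ has the form $ya$ with $a=\<z,x\>\in\A$, $\|a\|\le\|z\|$. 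Lemma~\ref{lem:comparkvadr} combined with admissibility gives
$$
\sum_{i=k}^\infty|\f_k(\<ya-yb,x_i\>)|^2\le \f_k\bigl((a-b)^*P_k(a-b)\bigr)\le \|P_k\|\,\|a-b\|^2,
$$
where $P_k:=\sum_{i\ge k}\<y,x_i\>\<x_i,y\>\to 0$ in norm. Given $\e>0$, choose $N$ with $\|P_N\|(2\|z\|)^2<\e^2$: this uniformly controls the supremum for $k\ge N$ and the tail $i\ge N$ inside each $k<N$. For the residual finite head (pairs $(k,i)$ with $1\le k\le i<N$) only finitely many $\C$-valued linear functionals $a\mapsto\f_k(a^*\<y,x_i\>)$ enter, and their joint image on the bounded set $\{\<z,x\>:x\in B\}\ss\A$ is totally bounded in $\C^{\mathrm{finite}}$, yielding the required finite $\e$-net.

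\textbf{Part (2).} Using that $\cN^0$ is countably generated, pick a sequential approximate unit $\{u_n\}\ss\A$ adapted to $\cN^0$ (concretely, in the $\sigma$-unital hereditary subalgebra of $\A$ generated by $\cN^0$), arranged so that $u_nu_{n+1}=u_n$ and $\|u_n\|\le 1$, and set $v_1=u_1^{1/2}$, $v_n=(u_n-u_{n-1})^{1/2}$ for $n\ge 2$. Then $\{v_i\}\ss\A$ is $\cN^0$-admissible (partial sums $\sum_{i=1}^N\<y,v_i\>\<v_i,y\>=y^*u_N y\le\<y,y\>$ converge to $\<y,y\>$) and satisfies the reconstruction formula $\sum_{i=1}^N v_i\<v_i,y\>=u_N y\to y$ in norm for each $y\in\cN^0$. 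Define
$$
G_N:=\sum_{i=1}^N \theta_{v_i,G^*(v_i)}\colon\M\to\A,\qquad G_Nx=u_N(Gx).
$$
By Proposition~\ref{prop:propAcomp} each $G_N$ is $\A$-compact, and $G_Nx\to Gx$ pointwise. The proof reduces to showing $\|G_N-G\|_{\mathrm{op}}=\sup_{y\in G(B)}\|u_N y-y\|\to 0$.

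\textbf{Main obstacle.} The crux, and the step I expect to be delicate, is upgrading pseudo-metric total boundedness of $G(B)$ to norm-uniform convergence $u_N\to I$ on $G(B)$. The classical Banach-space principle (pointwise convergence of a uniformly bounded sequence on a totally bounded set yields uniform convergence) is not directly available here, since the hypothesis supplies only pseudo-metric total boundedness, which is in general strictly weaker than norm total boundedness. To bridge the gap I would invoke Lemma~\ref{lem:estimfornonpositive}: whenever $\|u_N y-y\|$ threatens to remain bounded away from $0$ for some $y\in G(B)$, produce a state $\f$ with $\|u_N y-y\|\le 2|\f(u_N y-y)|$, fold such states into $\F$, and use the admissible system $\{v_i\}$ for $X$ so that $d_{X,\F}$ captures $u_N y-y$; the $\e$-nets supplied by the hypothesis should then contradict the failure of uniform convergence, forcing $G\in\bK(\M,\A)$.
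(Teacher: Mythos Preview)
Your Part~(1) is correct and in fact streamlines the paper's argument: where the paper builds an auxiliary element $b=a(\tau+a)^{-1}\in\cN^0$ so that $bG\approx G$ and then exploits convergence of $\sum_i\<b,x_i\>\<x_i,b\>$, you go directly to each $\theta_{y,z}$ and use $y\in\cN^0$ in the same way. The reductions (finite sums via (\ref{eq:triangle_inequa_2}), norm limits via (\ref{eq:sravnsobych})) and the finite-dimensional endgame match the paper's.

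Part~(2) has a real gap. Your overall strategy (set $G_N=u_NG$ and prove $\|G_N-G\|\to 0$) is logically equivalent to the paper's contrapositive, but the specific mechanism you propose for the ``main obstacle'' does not work with the admissible system $X=\{v_i\}=\{(u_i-u_{i-1})^{1/2}\}$. A state $\f$ produced by Lemma~\ref{lem:estimfornonpositive} satisfies $|\f((1-u_N)y)|\ge\tfrac12\|(1-u_N)y\|$, but the pseudometric $d_{X,\F}$ only sees quantities of the form $\f_k(\<y,v_i\>)=\f_k(y^*v_i)$ for \emph{individual} $i$. The defect $(1-u_N)y=\sum_{i>N}v_i^2\,y$ can be spread over arbitrarily many $v_i$'s with each $\|v_iy\|$ small, so no lower bound on any single $|\f_k(y^*v_i)|$ (hence on $d_{X,\F}$) follows. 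In short, Lemma~\ref{lem:comparkvadr} gives an \emph{upper} bound $\sum_i|\f_k(y^*v_i)|^2\le \f_k\bigl(\sum_i y^*v_i^2y\bigr)$, not the lower bound you need.

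The paper repairs this with an interleaving step you have not included. Given $\|(1-\omega_{i})z_i\|>\delta$ for suitable $z_i\in G(B)$, one passes to a subsequence $i(j)$ so that $\|z_{i(j)}-\omega_{i(j+1)}z_{i(j)}\|<\delta/2$ (possible because each $z_{i(j)}\in\cN^0$). Then the whole defect is \emph{concentrated} in a single element $x_j:=\omega_{i(j+1)}-\omega_{i(j)}$, and $\|x_jz_{i(j)}\|>\delta/2$. These $x_j$ (the differences themselves, not square roots) form an $\cN^0$-admissible system by $(\omega_{j}-\omega_{i})^2\le\omega_j-\omega_i$, and now Lemma~\ref{lem:estimfornonpositive} yields states with $|\f_j(\<z_{i(j)},x_j\>)|>\delta/4$, from which the contradiction with any finite $\e$-net follows. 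To complete your argument you must either reproduce this subsequence-and-differences construction or find another admissible $X$ in which the failure of $u_NG\to G$ is visible at a single index.
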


We need the following statement.

\begin{lem}\label{lem:notacomp}
Let $F:\M\to \A$ be a bounded adjointable, but 
not an $\A$-compact operator.
Suppose, $K>0$ is a constant.
Then there exists $\delta>0$ such that
for any $z\in \A$ there exists an element $x\in \M$
with $\|x\|\le 1$ such that $\|z \alpha -F(x)\|>\delta$
for any $\alpha\in\A$ with $\|\alpha\|\le K$.
Taking $\a=0$ gives $\|F(x)\|>\delta$.
\end{lem}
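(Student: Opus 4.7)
The plan is to prove the contrapositive. Assuming the conclusion fails, for every $\delta>0$ some $z\in\A$ allows every $x$ in the unit ball of $\M$ to be approximated in the form $\|z\alpha-F(x)\|\le\delta$ with $\|\alpha\|\le K$; from such data I would construct a sequence of $\A$-compact operators converging in norm to $F$, contradicting $F\notin\bK(\M,\A)$.

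First I would specialise $\delta=1/n$ to obtain $z_n\in\A$ and, for each $x$ in the unit ball $B\ss\M$, an $\alpha_{n,x}\in\A$ with $\|\alpha_{n,x}\|\le K$ and $F(x)=z_n\alpha_{n,x}+r_{n,x}$, $\|r_{n,x}\|\le 1/n$. The approximants I have in mind come from a Tikhonov-style regularisation: for a parameter $\e_n>0$ to be chosen later, set
$$a_n:=h_{\e_n}(z_nz_n^*)\in\A,\qquad h_\e(t):=\frac{t}{t+\e}.$$
The element $a_n$ lies in $\A$ itself (not merely in the unitization) because $h_\e(0)=0$, so left multiplication $L_{a_n}$ on $\A$ is $\A$-compact — either directly via an approximate-unit argument $L_{a_n}=\lim_\lambda \theta_{a_n,e_\lambda}$, or via the identification $\bK(\A,\A)\cong\A$. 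Proposition \ref{prop:propAcomp} then yields $L_{a_n}\circ F\in\bK(\M,\A)$.

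The technical heart is a uniform bound on $\|F(x)-a_nF(x)\|$. Using the functional-calculus intertwining $g_\e(zz^*)\,z=z\,g_\e(z^*z)$, where $g_\e(t):=1-h_\e(t)=\e/(t+\e)$, I would expand
$$(1-a_n)F(x)=g_{\e_n}(z_nz_n^*)\bigl(z_n\alpha_{n,x}+r_{n,x}\bigr)=z_n\,g_{\e_n}(z_n^*z_n)\alpha_{n,x}+g_{\e_n}(z_nz_n^*)r_{n,x},$$
estimate $\|z_n g_{\e_n}(z_n^*z_n)\|\le\sqrt{\e_n}/2$ (coming from the one-variable optimisation $\max_{t\ge 0}t\e^2/(t+\e)^2=\e/4$ at $t=\e$) and $\|g_{\e_n}(z_nz_n^*)\|\le 1$, and conclude $\|F(x)-a_nF(x)\|\le K\sqrt{\e_n}/2+1/n$ uniformly over $x\in B$. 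Choosing $\e_n=1/(Kn)^2$ forces the right-hand side to $O(1/n)$, so $L_{a_n}F\to F$ in operator norm, giving $F\in\bK(\M,\A)$ — a contradiction.

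The hard part will be recognising the right regulariser $a_n=h_{\e_n}(z_nz_n^*)$ together with the intertwining identity $z f(z^*z)=f(zz^*)z$; once these are in place, the remaining work is routine functional calculus and bookkeeping.
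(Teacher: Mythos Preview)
Your proof is correct and follows the same contrapositive strategy as the paper: assuming the conclusion fails, build $\A$-compact approximants of the form $L_c\circ F$ (left multiplication by some $c\in\A$ composed with $F$) converging to $F$ in norm. The difference lies only in the choice of $c$. The paper picks an element $\omega$ of an approximate unit with $\|z-\omega z\|<\e$ and uses $c=\omega$ (writing $L_\omega=\theta_{\omega^{1/2},\omega^{1/2}}$); the estimate $\|\omega F(x)-F(x)\|<(2+K)\e$ then falls out in two lines from $\|z\alpha_x-F(x)\|<\e$ and $\|\alpha_x\|\le K$. You instead build $c=a_n=z_nz_n^*(z_nz_n^*+\e_n)^{-1}$ directly from $z_n$ via functional calculus, and use the intertwining $f(zz^*)z=zf(z^*z)$ together with the scalar bound $\sup_{t\ge 0}t\e^2/(t+\e)^2=\e/4$ to control $\|(1-a_n)z_n\alpha_{n,x}\|$. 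Your route is a bit more technical but entirely self-contained (no external approximate unit is needed), and it makes the dependence on $z_n$ explicit; the paper's route is shorter and more elementary. Both hinge on the identification $\bK(\A,\A)\cong\A$ (equivalently, that $L_c$ is $\A$-compact for $c\in\A$), which you correctly invoke.
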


\begin{proof}
Suppose the opposite: for any $\e>0$ there exists
an element $z\in \A$ such that for any $x$
of norm 1 there exists
$\a_x$, $\|\a_x\|\le K$,
$\|z \alpha_x -F(x)\|<\e$. 

Choose an element of an approximate unit $\omega$ of $\A$, 
such that $\|z-\omega z\|<\e$, $0\le \omega \le 1$. 
Then for any $x\in \M$ of norm 1,
\begin{eqnarray*}
\|\theta_{\omega^{1/2},\omega^{1/2}}(F (x)) - F(x)\|&\le&
\|\theta_{\omega^{1/2},\omega^{1/2}}(z \a_x) - F(x)\|
+\|\theta_{\omega^{1/2},\omega^{1/2}}(z \a_x - F(x))\|
+\e 
\\
&\le &
\|\omega z \a_x - F(x)\|+\e = \|z \a_x - F(x)\|+
\|(z-\omega z)\a_x \| +\e \\
&\le& \|z \a_x - F(x)\|+ K\e+ \e < (2+K) \e. 
\end{eqnarray*}
This means that $F$ can be approximated by $\A$-compact operators
$\theta_{\omega^{1/2},\omega^{1/2}}\circ F$
(see Proposition \ref{prop:propAcomp}). Hence, $F$ is $\A$-compact. A contradiction.
\end{proof}

\begin{proof}[Proof of Theorem \ref{teo:mainfor_N=A}]
Let $a_1,a_2,\dots$ be a countable system
of generators for $\cN^0$. We have $G(\M)\sse \cN^0$,
hence $\overline{G(\M)} \sse \cN^0$. 
Consider a separable $C^*$-subalgebra $\A_0\sse\A$ generated by these elements and its increasing countable approximate unit
$\omega_i$, such that $\omega_i\le 1$, $\omega_i\le \omega_j$, if
$j<j$, and
\begin{equation}\label{eq:propapprunit1}
\omega_j\omega_i=\omega_i,\qquad j\ge i,
\end{equation}
\begin{equation}\label{eq:propapprunit2}
(\omega_j-\omega_i)^2 \le \omega_j-\omega_i,\qquad j\ge i.
\end{equation}

Suppose, that $G(B)$ is $(\A,\cN^0)$-totally bounded,
but $G$ is not $\A$-compact.
Then, for $K=\|G\|$,
Lemma \ref{lem:notacomp} implies that 
for some $\delta>0$ and each $\omega_i$, 
there exists an element $z_i=G(x_i)$, 
with $\|x_i\|\le 1$, 
such that $\|z_i-\omega_i\beta\|>\delta$ for any 
$\beta\in\A$ with $\|\beta\|\le K$.
Also, $\|z_i\|>\delta$ (cf. the last line of the formulation
of Lemma \ref{lem:notacomp}). 
In particular, for $\beta=z_i$,
\begin{equation}\label{eq:elementoff}
|(1-\omega_i) z_i\|=\|z_i-\omega_i z_i\|>\delta
\end{equation}
(in the unitalization).
Now choose some sub-sequence $i(j)$ of $i$ in such a way that
\begin{equation}\label{eq:estimforzomega}
\|\omega_{i(j+1)} z_{i(j)} - z_{i(j)}\| < \delta/2. 
\end{equation}
This is possible to do, because one can approximate $z_{i(j)}$
with a finite linear combination $a_1\alpha_1+\cdots+a_N\a_N$, while
$\{\omega_i\}$ is an approximate unit for each of $a_i$.

From (\ref{eq:elementoff}) and (\ref{eq:estimforzomega})
we obtain the estimation:
\begin{equation}\label{eq:estimperedsosto}
\|(\omega_{i(j+1)}-\omega_{i(j)}) z_{i(j)}\|\ge
\|\omega_{i(j)} z_{i(j)}-z_{i(j)}\|-
\|z_{i(j)}-\omega_{i(j+1)} z_{i(j)}\|>\delta-\delta/2=\delta/2.
\end{equation}

Suppose that $G(B)$ is totally bounded and consider a
semi-norm $d_{X,\F}$ defined by $X=\{x_j\}
:=\{\omega_{i(j+1)}-\omega_{i(j)}\}$
and $\F=\{\f_1,\f_2,\dots\}$, where
\begin{equation}\label{eq:vyborphi}
|\f_j(\<z_{i(j)},x_j\>)|=|\f_j((z_{i(j)})^*x_j)|=
|\f_j(x_j z_{i(j)})|\ge \frac{\delta}{4}.
\end{equation}
This $X$ is admissible for $\cN^0$ because we can estimate the partial sums by (\ref{eq:propapprunit2}):
$$
\sum_{j=1}^s (\omega_{i(j+1)}-\omega_{i(j)})^2\le
\sum_{j=1}^s \omega_{i(j+1)}-\omega_{i(j)}=\omega_{i(s+1)} 
$$
and hence, for any $x\in \A$,
$$
\sum_{j=1}^s \<x,x_j\>\<x_j,x\>=x^* \: \left(
\sum_{j=1}^s (\omega_{i(j+1)}-\omega_{i(j)})^2\right)\: x
\le x^* \omega_{i(s+1)} x \le x^*x=\<x,x\>.
$$
Similarly, the convergence for $x\in \cN^0$ follows from the estimation 
$$
\sum_{j=k}^s \<x,x_j\>\<x_j,x\>=x^* \: \left(
\sum_{j=k}^s (\omega_{i(j+1)}-\omega_{i(j)})^2\right)\: x
\le x^* (\omega_{i(s+1)}-\omega_{i(k)}) x 
$$
$$
\le 
\|x\|\cdot \|(\omega_{i(s+1)}-\omega_{i(k)}) x\|
\le \|x\|\cdot \|x-\omega_{i(k)} x\|\to 0\qquad (k\to\infty).
$$
Also, these $\f_j$ satisfying (\ref{eq:vyborphi})
do exist by (\ref{eq:estimperedsosto}) and Lemma \ref{lem:estimfornonpositive}.

Then there exist $y_1, \dots, y_D$ in $G(B)$ such that
for any $y\in G(B)$ there exists $k\in \{1,\dots,D\}$ such
that $d_{X,\F}(y,y_k)<\delta/8$. One can find, as above (cf.
the argument after (\ref{eq:estimforzomega})) a number $j_0$
such that
$$
\|(1-\omega_{i(j)}) y_k\| < \delta/8, \qquad j\ge j_0,\quad
k=1,\dots,D,
$$
and hence, for $j\ge j_0$, $k=1,\dots,D$,
\begin{equation}\label{eq:finoffinsys}
\delta/8 > \|\omega_{i(j+1)} (1-\omega_{i(j)}) y_k\| = 
\|(\omega_{i(j+1)}-\omega_{i(j)}) y_k\| = 
\|\<x_{j},y_k\>\|=\|\<y_k,x_{j}\>\|.
 \end{equation} 
Then, for all $k=1,\dots,D$ and $y:=z_{i(j)}$,
$$
d_{X,\F}(y,y_k)\ge |\f_{j+1}(\<y-y_k,x_{j}\>)|
> |\f_{j}(\<z_{i(j)},x_{j}\>)|-\delta/8
\ge \delta/4-\delta/8=\delta/8.
$$
by (\ref{eq:finoffinsys}) and (\ref{eq:vyborphi}).
A contradiction with the choice of $y_1, \dots, y_D$
and the supposition that $G(B)$ is $(\A,\cN^0)$-totally bounded.
This proves Theorem \ref{teo:mainfor_N=A}
in one direction.

Now suppose that $G$ is an $\A$-compact operator
relatively $\cN^0$. Denote 
$$
c:=\max\{1,\|G\|\}.
$$

Consider $d_{X,\F}$ for some $X=\{x_i\}$ 
(admissible for $\cN^0$)
and
$\F=\{\f_i\}$.
Consider arbitrary small $\e>0$. We can suppose that $\e<1$.
Then, one can approximate $G$ with a finite
combination of $\theta_{u,v}$: for any $x\in\M$,
$$
\|\theta_{u(1),v(1)}(x)+\cdots+ \theta_{u(n),v(n)}(x)-G(x)\|<
\frac{\e}{55c^2}\|x\|,
$$
$$
\|u(1)\<v(1),x\>+\cdots+ u(n)\<v(n),x\>-G(x)\|<
\frac{\e}{55c^2}\|x\|,\qquad u(i)\in\cN^0.
$$
We can replace $u(i)$ by some arbitrary close $\A$-linear
combinations of generators $a_j$ of $\cN^0$
and obtain (for simplicity
of notation we take successively all $a_1$, ... $a_D$ for some $D$)
$$
\|a_1 \<w_1,x\>+\cdots + a_D \<w_D,x\>-G(x)\|<
\frac{\e}{54c^2}\|x\|.
$$
Now, for a sufficiently small $\tau >0$, namely, 
$\tau<  \frac{\e^2}{54^2 c^4\cdot D^2 \cdot(\sup_j \|w_j\|)^2}$,
consider $a:=a_1 (a_1)^*+\cdots +a_D (a_D)^*$
and $b:=a(\tau+a)^{-1}$, so $\|b\|\le 1$.
We have $a\in\cN^0$, and hence, $b\in\cN^0$.
For any $j=1,\dots,D$ we have
$$
(b-1)a_j a_j^*(b-1)\le (a(\tau+a)^{-1}-1) a (a(\tau+a)^{-1}-1) \le \frac{\tau}{2},
$$
because we have the following estimation for positive numbers $t$,
$$
\left(\frac{t}{\tau+t}-1\right)^2 t=\frac{\t^2 t}{(\t+t)^2}\le 
\frac{\t^2 t}{2\t t}=\frac{\t}{2}.
$$
Hence,
\begin{equation}
\|(b-1)a_j\|\le \sqrt{\t/2}\le \frac{\e}{54 c^2\cdot D \cdot\sup_j \|w_j\|}.
\end{equation}

Thus,
\begin{eqnarray}\label{eq:bbfozen}
\|G- b G\|&\le& \|\theta_{a_1,w_1}+\cdots +\theta_{a_D,w_D}-G\|
+  \|(\theta_{a_1,w_1}- b \theta_{a_1,w_1})+\cdots \\
&&+(\theta_{a_D,w_D}- b \theta_{a_D,w_D})\| 
+ \| b\|\cdot \|\theta_{a_1,w_1}+\cdots +\theta_{a_D,w_D}-G\|
\nonumber\\
&\le& \frac{\e}{54c^2} + D \cdot\sup_j (\|(1- b)a_j\|\cdot \|w_j\|)+
\frac{\e}{54c^2}
\le \frac{\e}{18c^2}. 
\nonumber
\end{eqnarray}
By the triangle inequality, (\ref{eq:sravnsobych}),
and (\ref{eq:bbfozen}), for $r,t\in B$, we have
\begin{eqnarray*}
d_{X,\F}(G(r),G(t))&\le& d_{X,\F}(bG(r),bG(t)) + d_{X,\F}(G(r),bG(r)) +d_{X,\F}(G(t),bG(t)) 
\\
&\le&d_{X,\F}(bG(r),bG(t)) + \frac{\e}{9c^2}
\end{eqnarray*} 
and
\begin{eqnarray}\label{eq:bbfozenquadr}
d_{X,\F}(G(r),G(t))^2&\le& d_{X,\F}(bG(r),bG(t))^2 + \frac{\e^2}{81c^4}
+ 2 \frac{\e}{9c^2}\cdot \|G\|^2\\
&\le & d_{X,\F}(bG(r),bG(t))^2 + \frac{\e}{81}+ \frac{2\e}{9}
\nonumber\\
&\le & d_{X,\F}(bG(r),bG(t))^2 + \frac{\e}{3}.
\nonumber
\end{eqnarray} 

Since $b\in\cN^0$, the series
$
\sum_i \<b,x_i\>\<x_i,b\>
$
is convergent by the definition, and we can find 
a sufficiently large $K$ such that
$$
\left\| \sum_{i=K+1}^\infty \<b,x_i\>\<x_i,b\>\right\|<
\frac{\e}{12\|G\|^2}.
$$ 
Thus for any $x\in \M$,
\begin{eqnarray}\label{eq:estimhvostomega}
\left\| \sum_{i=K+1}^\infty \<b  G(x),x_i\>\<x_i,bG(x)\>\right\|
&=& \left\| 
(G(x))^*\left(\sum_{i=K+1}^\infty \<b ,x_i\>\<x_i,b\>
\right) G(x)\right\| \\
&\le &\|G(x)\|^2 \left\| \sum_{i=K+1}^\infty \<b,x_i\>\<x_i,b\>\right\|
< \frac{\e}{12}\|x\|^2.\nonumber
 \end{eqnarray} 
Taking into the account Lemma \ref{lem:comparkvadr} 
and (\ref{eq:estimhvostomega})
we have for $k> K$,
\begin{equation}\label{eq:estim_hvost}
\sum_{i=k}^\infty |\f_k\left(\<bG(r-t),x_i\>\right)|^2
\le 
\f_k\:\left(
\sum_{i=k}^\infty \<bG(r-t),x_i\>
\<x_i,bG(r-t)\>\right)\le \frac{\e}{12} 2^2=\frac{\e}{3}.
\end{equation}
Thus, by (\ref{eq:estim_hvost}) and (\ref{eq:bbfozenquadr}),
\begin{equation}\label{eq:ozenka_posled}
d_{X,\F}(G(r),G(t))^2\le \sup_{k\le K} 
\sum_{i=k}^K |\f_k\left(\<bG(r-t),x_i\>\right)|^2+\frac{\e}{3}+ \frac{\e}{3}.
\end{equation}
The first summand can be considered as the calculation of
the distance in the finite-di\-men\-si\-on\-al complex space $\C^d$
of dimension $d=K+(K-1)+...+1$ with the norm
\begin{equation}\label{eq:norm_comple}
\|(u_{1,1},\dots,u_{1,K}, u_{2,1},\dots, u_{2,K-1},\dots, u_{K,K})\|^2
\qquad\qquad\qquad\qquad\qquad
\end{equation}
$$
\qquad
\qquad\qquad=\sup\left\{|u_{1,1}|^2+\cdots+|u_{1,K}|^2,
|u_{2,1}|^2+\cdots+|u_{2,K-1}|^2,\dots, |u_{K,K}|^2
\right\}
$$
between the images of $G(r)$ and $G(t)$ under the following bounded
$\C$-linear map 
$$
R: G(B)\to \C^d,\qquad y \mapsto \{\f_k\left(\<x_i,b y\>\right)\},\qquad
k=1,\dots,K,\quad i=k,\dots,K.
$$ 
(We have transposed entries to have not an anti-linear, but a
linear map.)
Thus one can find an $\e/3$-net in $G(B)$ for the semi-norm
$\g$, being the composition of $R$ and (\ref{eq:norm_comple}).
Namely, find an $\e/3$-net $R(G(t_1)),\dots, R(G(t_s))$,
$t_i\in B$,
in $\C^d$ for the bounded set
$R(G(B))$ and the norm (\ref{eq:norm_comple}). 
Then  $G(t_1),\dots, G(t_s)$ will be
an $\e/3$-net for $\g$.
By (\ref{eq:ozenka_posled}) $G(t_1),\dots, G(t_s)$
 will be an $\e$-net for $d_{X,\F}$.
\end{proof}

\section{The general case}\label{sec:gencasereduc}
\begin{dfn}
Denote by $q_n$ the orthogonal projection
in $\ell_2(\A)$ onto the $n$-th summand of the standard
decomposition $\ell_2(\A)$.

Denote by $Q_n:=q_1+\dots+q_n$ the orthogonal projection
onto the module $L_n\cong \A^n$ formed by the first $n$ 
standard summands.
\end{dfn}

The proof of the general case of Main Theorem \ref{teo:mainteorem}
will be done in the following way and uses, in particular, a 
reduction to Theorem \ref{teo:mainfor_N=A}.

\begin{proof}[Proof of Theorem \ref{teo:mainteorem}]
Denote by 
$S$ the Kasparov stabilization $S:\cN\to \cN\oplus \ell_2(\A)\cong
\ell_2(\A)$ and prove the theorem moving along the following 
cycle of statements:

\noindent
\fbox{1} $F:\M\to \cN$ is an $\A$-compact operator and $\cN$
is a countably generated $\A$-module.

\begin{quote}
If a combination of $\theta_{x,y}$ approximates $F$,
where $x\in\cN$, $y\in \M$, then the same combination of
$\theta_{S(x),y}$ approximates $F'=S\circ F$. Thus, we obtain:
\end{quote}

\noindent
\fbox{2} The image of its Kasparov stabilization $F':\M\to \cN\ss \cN\oplus \ell_2(\A)$  is contained in a
countably generated module $S(\cN)$ and $F'$
is $\A$-compact relatively $S(\cN)$. 

\begin{quote}
By Lemma \ref{lem:epsilonapprell2_acomp_back} below this implies:
\end{quote}

\noindent
\fbox{3} For arbitrary $\e>0$ there exists $D$ such that
$\|F'-Q_D F'\|<\e$, the image of $Q_D F'$ is contained
in a countably generated module $Q_D S(\cN)$  and
$Q_D F'$ is $\A$-compact relatively $Q_D S(\cN)$.

\begin{quote}
By Lemma \ref{lem:rel_comp_compos}
 we obtain:
\end{quote}

\noindent
\fbox{4} For arbitrary $\e>0$ there exists $D$ such that
$\|F'-Q_D F'\|<\e$, the image of $q_i F'$ is contained
in a countably generated module $q_i S(\cN)$  and
$q_i F'$ is $\A$-compact relatively  $q_i S(\cN)$, $i=1,\dots,D$.

\begin{quote}
By Theorem \ref{teo:mainfor_N=A} we arrive to:
\end{quote}

\noindent
\fbox{5} For arbitrary $\e>0$ there exists $D$ such that
$\|F'-Q_D F'\|<\e$, the image of $q_i F'$ is contained
in a countably generated module $q_i S(\cN)$  and
$q_i F'(B)\ss\A$ is $(\A,q_i S(\cN))$-totally bounded, 
$i=1,\dots,D$.

\begin{quote}
Apply inductively Lemma \ref{lem:directsum_totbu},
keeping in mind that $q_i Q_D=q_i$ and obtain:
\end{quote}

\noindent
\fbox{6} For arbitrary $\e>0$ there exists $D$ such that
$\|F'-Q_D F'\|<\e$, the image of $Q_D F'$ is contained
in a countably generated module $q_1 S(\cN)\oplus\cdots\oplus
q_D S(\cN)$  and
$Q_D F'(B)\sse\A\oplus\cdots\oplus\A$ is 
$(\A\oplus\cdots\oplus\A,q_1 S(\cN)\oplus\cdots\oplus
q_D S(\cN))$-totally bounded.

\begin{quote}
By Lemma \ref{lem:epsilonapprell2_totbun} below we have:
\end{quote}

\noindent
\fbox{7}  The image of $F':\M\to \ell_2(\A)$ is contained
in a countably generated module $\cN^1:=q_1 S(\cN)\oplus\cdots\oplus
q_n S(\cN)\oplus \cdots$ (where the $C^*$-Hilbert sum supposes
taking the closure) and
$F'(B)$ is 
$(\ell_2(\A),\cN^1)$-totally bounded.

\begin{quote}
We have $S^*(\cN^1)=\cN$. Indeed, 
for an arbitrary small $\e$
and any $x\in S(\cN)$ we can find $Q_n$ such that
$\|Q_n x-x\|<\e$, where we have $Q_n(x)=q_1 x +\cdots +q_n x \in\cN^1$.
Since $\cN^1$ is closed, this implies that $S$
maps injectively $\cN \to \cN^1$. Then $S(\cN)$ has an
orthogonal complement $\cN^2$ in $\cN^1$, namely,
$\cN^2=\cN^1\cap (S(\cN))^\bot$, where $(S(\cN))^\bot$
is the orthogonal complement in $\ell_2(\A)$.
Thus, we have $\cN^1=S(\cN)\oplus \cN^2$.
Since $S^*S(y)=y$ and $S^*(z)=0$ if $z\in \cN^2$,
$S^*:\cN^1\to \cN$ is a surjection. 
Now we can apply Lemma \ref{lem:directsum_totbu} 
to the direct sum $\cN^1=S(\cN)\oplus \cN^2$ to obtain:
\end{quote}

\noindent
\fbox{8} $F(B)$ is 
$(\cN,\cN)$-totally bounded.

\begin{quote}
We can apply Lemma \ref{lem:directsum_totbu} 
``in the opposite
direction'', because evidently $(1-P)F'(B)=0$ is totally bounded,
where $P=SS^*$ is the orthogonal projection onto $S(\cN)$. Then we 
have:
\end{quote}

\noindent
\fbox{9}  
$F'(B)$ is 
$(\ell_2(\A),S(\cN))$-totally bounded.

\begin{quote}
By Lemma \ref{lem:epsilonapprell2_totbun} below
we obtain:
\end{quote}

\noindent
\fbox{10} For arbitrary $\e>0$ there exists $D$ such that
$\|F'-Q_D F'\|<\e$, the image of $Q_D F'$ is contained
in a countably generated module $Q_D S(\cN)$  and
$Q_D F'(B)\sse\A\oplus\cdots\oplus\A$ is 
$(\A\oplus\cdots\oplus\A,Q_D S(\cN))$-totally bounded.

\begin{quote}
Apply inductively Lemma \ref{lem:directsum_totbu}
(keeping in mind that $q_i Q_D=q_i$) to obtain:
\end{quote}

\noindent
\fbox{11}  
For arbitrary $\e>0$ there exists $D$ such that
$\|F'-Q_D F'\|<\e$, the image of $q_i F'$ is contained
in a countably generated module $q_i S(\cN)$  and
$q_i F'(B)\ss\A$ is 
$(\A,p_i S(\cN))$-totally bounded, $i=1,\dots,D$.

\begin{quote}
By Theorem \ref{teo:mainfor_N=A} we obtain:
\end{quote}

\noindent
\fbox{12} For arbitrary $\e>0$ there exists $D$ such that
$\|F'-Q_D F'\|<\e$, the image of $q_i F'$ is contained
in a countably generated module $q_i S(\cN)$  and
$q_i F':\M\to\A$ is $\A$-compact, $i=1,\dots,D$.

\begin{quote}
Applying inductively Lemma \ref{lem:Acompdirsum}
we arrive to:
\end{quote}

\noindent
\fbox{13} For arbitrary $\e>0$ there exists $D$ such that
$\|F'-Q_D F'\|<\e$ and
$Q_D F':\M\to L_D$ is $\A$-compact.

\begin{quote}
The operator $F'$ is approximated by $\A$-compact
operators $Q_D F'$. Thus, $F'$ is $\A$-compact (find more detail
in Lemma \ref{lem:epsilonapprell2_acomp}):
\end{quote}

\noindent
\fbox{14}  
$F'$ is $\A$-compact. 

\begin{quote}
 Since $F=S^*\circ F'$, Proposition \ref{prop:propAcomp}
 implies:
\end{quote}

\noindent
\fbox{15} $F:\M\to\cN$ is $\A$ compact.
\end{proof}

\begin{lem}\label{lem:epsilonapprell2_acomp}
Suppose $F':\M\to \ell_2(\A)$ is an adjointable operator and
for any $\e>0$
there exists an integer $D$ such that
\begin{enumerate}[\rm(1)]
\item $\|F'-Q_D F'\|<\e$,
\item $ Q_D F' $ is an $\A$-compact operator (as an operator
$\ell_2(\A)\to L_D$).
   \end{enumerate} 
Then $F'$ is $\A$-compact.    
\end{lem}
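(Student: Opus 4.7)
The plan is to observe that the hypotheses present $F'$ as a uniform limit of operators that are visibly $\A$-compact, and then invoke the fact that $\bK(\M,\ell_2(\A))$ is norm-closed by definition (Definition \ref{dfn:Acompact}).

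First I would argue that each $Q_D F'$, viewed as an operator $\M\to\ell_2(\A)$, lies in $\bK(\M,\ell_2(\A))$. By hypothesis it is $\A$-compact as an operator $\M\to L_D$, so there is a finite combination of elementary operators $\theta_{x_i,y_i}$ with $x_i\in L_D$, $y_i\in\M$, approximating $Q_D F'$ in the operator norm $\bL(\M,L_D)$. Since the inclusion $L_D\hookrightarrow\ell_2(\A)$ is isometric (being the adjoint of the orthogonal projection $Q_D$), the same finite combination approximates $Q_D F'$ equally well in $\bL(\M,\ell_2(\A))$; and each $\theta_{x_i,y_i}$ with $x_i\in L_D\subset\ell_2(\A)$ is by definition an elementary $\A$-compact operator in $\bK(\M,\ell_2(\A))$. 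Thus $Q_D F'\in\bK(\M,\ell_2(\A))$. (Alternatively, one can invoke Proposition \ref{prop:propAcomp} applied to the composition of $Q_D F'\in\bK(\M,L_D)$ with the adjointable inclusion $L_D\hookrightarrow\ell_2(\A)$.)

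Next, given any $\e>0$, choose $D$ as in the hypothesis so that $\|F'-Q_D F'\|<\e$. By the previous step $Q_D F'\in\bK(\M,\ell_2(\A))$, so $F'$ lies within distance $\e$ of $\bK(\M,\ell_2(\A))$. Since $\e>0$ was arbitrary and $\bK(\M,\ell_2(\A))$ is closed in $\bL(\M,\ell_2(\A))$ by definition, we conclude $F'\in\bK(\M,\ell_2(\A))$, i.e.\ $F'$ is $\A$-compact. There is no real obstacle here; the only point worth spelling out explicitly is the passage from ``$\A$-compact into $L_D$'' to ``$\A$-compact into $\ell_2(\A)$'', which is the elementary composition argument above.
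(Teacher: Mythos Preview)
Your proof is correct and follows essentially the same route as the paper: you show that $Q_D F'$ is $\A$-compact as an operator into $\ell_2(\A)$ (the paper phrases this via the orthogonal complement of $L_D$, you via the isometric inclusion, but these are the same observation), and then conclude by norm-closedness of $\bK(\M,\ell_2(\A))$.
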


\begin{proof}
Since $L_D$ has an orthogonal complement in $\ell_2(\A)$,
the operator $ Q_D F' $ is $\A$-compact as an operator
$\ell_2(\A)\to\ell_2(\A)$. Hence, $F'$ is approximated by
$\A$-compact operators and $F'$ is $\A$-compact itself.
\end{proof}

\begin{lem}\label{lem:epsilonapprell2_acomp_back}
Suppose the image of an adjointable operator
$F':\M\to \ell_2(\A)$ is contained in a countably
generated module $\cN^0$ and $F'$ is an $\A$-compact operator
relatively $\cN^0$.
Then for any $\e>0$
there exists an integer $D$ such that
\begin{enumerate}[\rm(1)]
\item $\|F'-Q_D F'\|<\e$,
\item $ Q_k F' $ is an $\A$-compact operator relatively
$Q_k \cN^0$ for any $k$.
   \end{enumerate} 
\end{lem}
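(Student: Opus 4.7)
The plan is to observe that statement (2) is an immediate consequence of Lemma \ref{lem:rel_comp_compos}, while (1) requires a truncation argument combining a norm approximation of $F'$ by a finite sum of elementary $\A$-compact operators with the fact that for each element of $\ell_2(\A)$ the truncations $Q_D x$ converge to $x$ in norm.

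For part (2), fix any $k$. The projection $Q_k:\ell_2(\A)\to L_k$ is adjointable and satisfies $Q_k(\cN^0)\subseteq Q_k\cN^0$ tautologically, so Lemma \ref{lem:rel_comp_compos} applied to $G=Q_k$ with $\cN^0_1:=Q_k\cN^0$ yields $Q_k F'\in \bK(\M,L_k;Q_k\cN^0)$.

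For part (1), fix $\e>0$. By the hypothesis $F'\in\bK(\M,\ell_2(\A);\cN^0)$, choose $x_1,\dots,x_n\in\cN^0$ and $y_1,\dots,y_n\in\M$ such that $T:=\sum_{j=1}^n\theta_{x_j,y_j}$ satisfies $\|F'-T\|<\e/3$. Using the identity $Q_D\theta_{x_j,y_j}=\theta_{Q_Dx_j,y_j}$, we see that $T-Q_DT=\sum_{j=1}^n\theta_{x_j-Q_Dx_j,y_j}$ and hence
\[
\|T-Q_DT\|\le\sum_{j=1}^n\|x_j-Q_Dx_j\|\cdot\|y_j\|.
\]
Each $x_j=(a_{i,j})_i\in\ell_2(\A)$ satisfies $\|x_j-Q_Dx_j\|^2=\left\|\sum_{i>D}a_{i,j}^*a_{i,j}\right\|\to 0$ as $D\to\infty$, and since the sum over $j$ is finite, a single $D$ can be chosen uniformly making the right-hand side above less than $\e/3$. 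Using $\|Q_D\|\le 1$, the triangle inequality then yields
\[
\|F'-Q_DF'\|\le\|F'-T\|+\|T-Q_DT\|+\|Q_D\|\cdot\|T-F'\|<\e.
\]

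The argument is essentially formal, with no serious obstacle: (2) and (1) are independent, each following from a standard approximation step, the only mild subtlety being that (2) is asserted for \emph{every} $k$ (which is fine because Lemma \ref{lem:rel_comp_compos} applies to each $Q_k$ separately) whereas (1) fixes a single $D$ depending on $\e$ (where finiteness of the approximating combination is essential).
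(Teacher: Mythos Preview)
Your proof is correct and follows essentially the same approach as the paper. For (1) both arguments approximate $F'$ by a finite sum $\sum_j\theta_{x_j,y_j}$ with $x_j\in\cN^0$, use $\|x_j-Q_Dx_j\|\to 0$ to control $\|T-Q_DT\|$, and finish with the triangle inequality; for (2) you invoke Lemma~\ref{lem:rel_comp_compos}, whereas the paper unwinds that lemma by hand via $Q_k\theta_{x_j,y_j}=\theta_{Q_kx_j,y_j}$ with $Q_kx_j\in Q_k\cN^0$, which amounts to the same thing.
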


\begin{proof}
Approximate $F'$:
$$
\|F'-(\theta_{x_1,y_1}+\dots +\theta_{x_r,y_r})\|<\e/3,\qquad x_i,y_i\in \cN^0.
$$
Find a sufficiently large $D$ such that
$$
\|x_j-Q_D x_j\|<\frac{\e}{3 r \|y_j\|},\qquad j=1,\dots,r.
$$
Then
$$
\|\theta_{x_j,y_j}-Q_D \theta_{x_j,y_j}\|<\frac{\e}{3 r},\qquad j=1,\dots,r,
$$
and
\begin{eqnarray*}
\|F'-Q_D F'\|&\le & \|F'-(\theta_{x_1,y_1}+\dots +\theta_{x_r,y_r})\|
+ \|Q_D(F'-(\theta_{x_1,y_1}+\dots +\theta_{x_r,y_r}))\|\\
&& + \sum_{j=1}^r \|\theta_{x_j,y_j}-Q_D \theta_{x_j,y_j}\|
<\frac{\e}{3} + \frac{\e}{3} + r \frac{\e}{3r}=\e.
\end{eqnarray*}

Once again use the first inequality of the proof and obtain
for an arbitrary $k$ and $\e/3$:
$$
\e/3>\|Q_k (F'-(\theta_{x_1,y_1}+\dots +\theta_{x_r,y_r})\|=
\|Q_k F'-(\theta_{Q_k(x_1),y_1}+\dots +\theta_{Q_k(x_r),y_r})\|.
$$
Since $Q_k(x_j)\in Q_k(\cN^0)$, we are done.
\end{proof}

\begin{lem}\label{lem:epsilonapprell2_totbun}
Suppose $F':\M\to \ell_2(\A)$ is an adjointable operator.
If $F'(B)$ is totally bounded relatively
a countably generated submodule $\cN^0$ if and only if 
\begin{enumerate}[\rm 1)]
\item for any $\e>0$
there exists an integer $D$ such that
$\|F'-Q_D F'\|<\e$,
\item $ Q_k F'(\M)$ is contained in a countably generated 
submodule $\cN^0_k$, satisfying 
\begin{equation}\label{eq:coher}
Q_k \cN^0_{k+1}=\cN^0_k
\end{equation}
for any $k$.
\item $ Q_k F' (B)$ is totally bounded 
relatively $\cN^0_k$ for any $k$,
   \end{enumerate}   
   
Conversely, if 1), 2), and 3) have place, then
$F'$ is totally bounded relatively the countably generated
submodule  $\cN^0:=\overline{+_k \cN^0_k}$ in $\ell_2(\A)$. 
\end{lem}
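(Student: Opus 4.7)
I would prove the two implications separately, with the work concentrated in the forward one. For (2), set $\cN^0_k:=Q_k(\cN^0)$: countable generation is preserved by $Q_k$, and the coherence $Q_k\cN^0_{k+1}=Q_kQ_{k+1}\cN^0=Q_k\cN^0=\cN^0_k$ is automatic from $Q_kQ_{k+1}=Q_k$. For (3), apply Lemma \ref{lem:directsum_totbu} to the decomposition $\ell_2(\A)=L_k\oplus L_k^\bot$ with $p_1=Q_k$ to obtain that $Q_kF'(B)$ is $(L_k,\cN^0_k)$-totally bounded.

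The real content is (1), which I would argue by contradiction. A failure of (1) supplies $\e_0>0$, $D_n\to\infty$, and $\xi_n\in B$ with $\|(I-Q_{D_n})y_n\|>\e_0$, where $y_n:=F'(\xi_n)$. Using that $Q_Ey\to y$ in norm for each fixed $y\in\ell_2(\A)$, I would inductively thin the sequence so that at stage $k+1$ one picks $n_{k+1}>n_k$ with $D_{n_{k+1}}\ge E_k$ and simultaneously $\|(I-Q_{D_{n_{k+1}}})y_{n_j}\|<\e_0/2^{k+1}$ for every $j\le k$; then choose $E_{k+1}>D_{n_{k+1}}$ with $\|(I-Q_{E_{k+1}})y_{n_{k+1}}\|<\e_0/10$. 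The slices $u_k:=(Q_{E_k}-Q_{D_{n_k}})y_{n_k}$ then have pairwise disjoint coordinate support and $\|u_k\|>9\e_0/10$, so $v_k:=u_k/\|u_k\|$ form an admissible system $X$ for $\cN^0$ by a block-wise Cauchy-Schwarz in the spirit of Example \ref{ex:firstex}. Lemma \ref{lem:estimfornonpositive} furnishes states $\f_k$ with $|\f_k(\<y_{n_k},v_k\>)|\ge\|u_k\|/2\ge 9\e_0/20$. For $k<l$, the support of $v_l$ lies beyond $D_{n_l}$, so $\<y_{n_k},v_l\>=\<(I-Q_{D_{n_l}})y_{n_k},v_l\>$ has norm $<\e_0/2^l$ by the thinning; taking the outer supremum in (\ref{eq:defnmetr}) at index $l$ yields $d_{X,\F}(y_{n_k},y_{n_l})\ge|\f_l(\<y_{n_l},v_l\>)|-|\f_l(\<y_{n_k},v_l\>)|\ge 9\e_0/20-\e_0/2^l$, and symmetrically for $l<k$ using $\f_k$ at index $k$. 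Hence, for $\max(k,l)$ past some $K_0$, the $y_{n_k}$'s are pairwise bounded away in $d_{X,\F}$, producing an infinite subset of $F'(B)$ with no finite $d_{X,\F}$-net, contradicting total boundedness.

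For the converse, set $\cN^0:=\overline{\sum_k\cN^0_k}$; it is countably generated, and $F'(B)\subset\cN^0$ because $F'(x)=\lim_kQ_kF'(x)\in\overline{\bigcup_k\cN^0_k}$ by (2). Given admissible $X$ for $\cN^0$ and $\e>0$, use (1) to pick $D$ with $\|F'-Q_DF'\|<\e/3$. Since $\cN^0_D\subset\cN^0$, $X$ is admissible for $\cN^0_D$; so is the projected system $Q_DX$ in $L_D$, and self-adjointness of $Q_D$ gives $d_{X,\F}(Q_Du,Q_Dv)=d_{Q_DX,\F}(Q_Du,Q_Dv)$. Apply (3) to find $\xi_1,\dots,\xi_s\in B$ with $\{Q_DF'(\xi_j)\}$ an $\e/3$-net for $Q_DF'(B)$ in $d_{Q_DX,\F}$. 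The triangle inequality, together with $d_{X,\F}(w,Q_Dw)\le\|w-Q_Dw\|<\e/3$ for $w\in F'(B)$ via (\ref{eq:sravnsobych}), then shows $\{F'(\xi_j)\}$ is an $\e$-net for $F'(B)$.

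The main obstacle will be orchestrating the induction in (1) so that three competing constraints hold simultaneously at each stage: the failure hypothesis must still produce a new element with large $(I-Q_{D_{n_{k+1}}})$-mass; the single-vector tail-decay must make the previously chosen $y_{n_j}$ invisible to $v_{k+1}$; and the blocks must remain disjoint. The unifying observation is that the asymmetry between the two cross-term bounds (for $k<l$ and $l<k$) is absorbed by taking $\sup_j$ in (\ref{eq:defnmetr}) at the larger of the two indices.
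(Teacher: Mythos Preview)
Your argument is correct and tracks the paper's proof closely for items (2), (3), and the converse: you set $\cN^0_k=Q_k\cN^0$, invoke Lemma~\ref{lem:directsum_totbu} for (3), and your $\e/3$--$\e/3$--$\e/3$ argument via $d_{X,\F}(Q_Du,Q_Dv)=d_{Q_DX,\F}(Q_Du,Q_Dv)$ and (\ref{eq:sravnsobych}) is exactly the paper's computation.

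The genuine difference is in (1). Both you and the paper argue by contradiction and manufacture a pair $(X,\F)$ witnessing failure of total boundedness, but the building blocks differ. The paper asserts that failure of (1) yields single coordinates $q_{j(i)}$ with $\|q_{j(i)}F'(z_i)\|>\delta$, then places approximate-unit elements $\mu_i$ in those coordinate slots to form the admissible system, and finally contradicts a chosen finite $d_{X,\F}$-net. You instead extract from the failure of (1) whole tails $\|(I-Q_{D_n})y_n\|>\e_0$, thin inductively so that disjoint blocks $u_k=(Q_{E_k}-Q_{D_{n_k}})y_{n_k}$ carry most of that mass, and use the normalized $v_k=u_k/\|u_k\|$ themselves as the admissible system (admissibility following from block-orthogonality and Cauchy--Schwarz, not from Example~\ref{ex:firstex} per se). You then exhibit an infinite $d_{X,\F}$-separated subset directly rather than playing against a hypothetical net. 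Your route avoids approximate units entirely and, more importantly, sidesteps the passage from ``$\|(1-Q_D)F'\|\not\to 0$'' to ``some single coordinate $\|q_jF'\|$ stays large,'' which is not automatic; in this respect your block construction is the more robust of the two. The cost is a slightly more delicate induction to keep the three constraints (large new block, small cross-terms, disjoint supports) aligned, which you handle correctly.
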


\begin{proof}
Suppose that $F'(B)$ is totally bounded  relatively
a countably generated submodule $\cN^0$.
Take $\cN^0_k$ to be $Q_k \cN^0$.
Then evidently, 2) takes place, while
3) follow from Lemma \ref{lem:directsum_totbu}
for mutually complement projections $Q_k$ and $1-Q_k$.
Suppose that 1) is not true. Then there exists $\delta>0$
and a sequence of elements $z_i\in B$ and a corresponding
increasing sequence of numbers $j(i)\to\infty$ such that 
$\|q_{j(i)} F'(z_i)\|>\delta$. Let $\mu_i$ be an element
of an approximate unit of $\A$ (generally uncountable) such that
\begin{equation}\label{eq:ozenkasomegojj}
\| \mu_i q_{j(i)} F'(z_i)\|>\frac{3}{4}\delta. 
 \end{equation} 
 Suppose $x_i\in \ell_2(\A)$ has $\mu_i$ at the $j(i)$-th place
 and zeros on the remaining ones. In other words,
 $$
q_{j(i)} x_i =  \mu_i, \qquad (1-q_{j(i)}) x_i =0.
 $$
Rewrite (\ref{eq:ozenkasomegojj}) as
$$
\left\| \mu_i q_{j(i)} \frac{F'(z_i)}{\|F'\|}\right\|>
\frac{3\delta}{4\,\|F'\|} 
$$ 
and use Lemma \ref{lem:estimfornonpositive}
to find a state $\f_i$ such that
$$
\left| \f_i\left(\mu_i q_{j(i)} \frac{F'(z_i)}{\|F'\|}\right)\right|>
\frac{1}{2}\cdot \frac{3\delta}{4\,\|F'\|} ,
$$
or
\begin{equation}\label{eq:bolshiedalekieell}
|\f_i (\mu_i q_{j(i)} F'(z_i))|>\frac{3\delta}{8\,\|F'\|}.
\end{equation}
For this data there exists
$d_{X,\F}$ with $X=\{x_i\}$ and $\F=\{\f_i\}$, because
$X$ is evidently admissible even for the entire $\ell_2(\A)$
(cf. Example \ref{ex:firstex}). 
Thus one can find a
finite collection $\{y_1,\dots,y_n\}$ of elements of $B$
such that for any $y\in B$ there exists $k\in \{1,\dots,n\}$
with
\begin{equation}\label{eq:ozenkadliasetiell}
d_{X,\F}(F'(y),F'(y_k))<\frac{\delta}{8\,\|F'\|}.
\end{equation}
There exists a sufficiently large $D$ such that
\begin{equation}\label{eq:ubyvaniesetiell}
\|(1-Q_D) F'(y_k)\|<\frac{\delta}{8\,\|F'\|},\qquad k=1,\dots,n.
\end{equation}
Then for $j(i)>D$ and any $k\in \{1,\dots,n\}$, 
by (\ref{eq:bolshiedalekieell}) and (\ref{eq:ubyvaniesetiell})
we have
$$
d_{X,\F} (F'(z_i),F'(y_k))
\ge |\f_i(\<F'(z_i)-F'(y_k),x_i\>)|=
|\f_i(\<x_i,F'(z_i)-F'(y_k)\>)|
$$
$$
=|\f_i(\mu_i (q_{j(i)} F'(z_i))) -\f_i(\mu_i (q_{j(i)} F'(y_k)))|
\ge \frac{3\delta}{8 \|F'\|} - \frac{\delta}{8\,\|F'\|} = \frac{\delta}{4\,\|F'\|}.
$$
A contradiction with (\ref{eq:ozenkadliasetiell}).

Conversely, 
items 1) and 2) imply that $F'(\M)$ is contained in
the above defined module $\cN^0$, which is
a countably generated submodule with a set of generators being
a union of countable generating sets for all $Q_k F'(\M)$
(cf. the proof of Lemma \ref{lem:obraz_a_comp}). 

Let $d_{X,\F}$ be a seminorm for some admissible $X$ in $\ell_2(\A)$
for this $\cN^0$.
Consider an arbitrary $\e>0$. Choose a sufficiently large $n$ such that
\begin{equation}
\|Q_n F' - F'\|< \frac{\e}{3}.
\end{equation}
As in the proof of Lemma \ref{lem:directsum_totbu}, we see that
$X':=\{Q_n x_i\}$ is an admissible set for $Q_n \cN^0=\cN^0_n$ 
(by (\ref{eq:coher}))
and
\begin{equation}\label{eq:dliapoekpolunorma1}
d_{X',\F}(Q_n u, Q_n z) =d_{X,\F}(Q_n u,Q_n z). 
\end{equation}
Thus by item (2), we can find 
an $\e/3$-net $\{Q_n F'(y_1),\dots,Q_n F'(y_s)\}$ in $Q_n F'(\M)$
for $d_{X',\F}$. We claim that $\{F'(y_1),\dots, F'(y_s)\}$
is an $\e$-net in $F'(\M)$ for $d_{X,\F}$. Indeed,
for any $y\in B$ 
find a number $k\in \{1,\dots,s\}$ such that
$$
d_{X',\F}(Q_n F'(y_k), Q_n F'(y))<\e/3.
$$
Then by (\ref{eq:dliapoekpolunorma1}) and (\ref{eq:sravnsobych}),
$$
d_{X,\F}(F'(y_k),F'(y))\le 
d_{X,\F}(F'(y_k), Q_n F'(y_k))
+ d_{X,\F}(Q_n F'(y_k), Q_n F'(y))
$$
$$
\hfill +d_{X,\F}(Q_n F'(y),F'(y))
$$
$$
\le \|F'(y_k)- Q_n F'(y_k)\|
+ d_{X',\F}(Q_n F'(y_k), Q_n F'(y))
+\|Q_n F'(y)-F'(y)\| 
$$
$$
<\frac{\e}{3} +\frac{\e}{3}+\frac{\e}{3}=\e.
$$
This completes the proof.
\end{proof}



\end{document}